\newtheorem*{thm_restated}{Theorem}
\newcommand{\norm}[1]{\left\Vert#1\right\Vert}
\newcommand{\brac}[1]{\left [#1\right ]}
\newcommand{\tr}[1]{\mathrm{tr} #1}
\newcommand{\Real}{\mathbb R}
\newcommand{\A}{\mathcal{A}}
\newcommand{\OO}{\mathcal{O}}
\newcommand {\1}{\mathrm{\textbf{1}}}
\newcommand{\one}{\1}
\def \diag{\mathrm{diag}}
\def \M{\mathcal{M}} 
\def \PP{\mathcal{P}}
\def \S{\mathcal{S}}
\newcommand{\J}{\mathcal{J}}
\newcommand{\dist}{\mathrm{d}} 
\newcommand{\fnorm}[1]{\norm{#1}_F}
\DeclareMathOperator{\conv}{conv}
\newcommand{\eg}{{\it e.g.}}
\newcommand{\ie}{{\it i.e.}}
\newcommand{\perm}{\Pi}
\newcommand{\dConv}{\underline{\dist}} 
\renewcommand {\vec}[1]{\mathbf{#1}}
\newcommand{\RR}{\mathbb{R}}
\newcommand{\MF}{\mathcal{M}_F} 
\renewcommand{\M}{\mathcal{M}}
\newcommand{\N}{\mathcal{N}}
\newcommand{\Rmu}{\mu[R]}
\newcommand{\Iso}{\mathrm{ISO}}
\newcommand{\bigO}{O}
\newcommand{\Prob}{\mathbb{P}}
\newcommand{\twopartdef}[4]
{
        \left\{
                \begin{array}{ll}
                        #1 & \mbox{if } #2 \\
                        #3 & \mbox{if } #4
                \end{array}
        \right.
}
\title{Exact Recovery with Symmetries\\ for Procrustes Matching}
\author{Nadav Dym and Yaron Lipman}
\begin{document}

\sloppy
\maketitle

        \begin{abstract}
                {The Procrustes matching (PM) problem is the problem of finding the optimal rigid motion and labeling of two point sets so that they are as close as possible. Both rigid and non-rigid shape matching problems can be formulated as PM problems. Recently \cite{Haggai} presented a novel convex semi-definite programming relaxation (PM-SDP) for PM which achieves state of the art results on common shape matching benchmarks.

                In this paper we analyze the successfulness of PM-SDP in solving PM problems without noise (Exact PM problems). We begin by showing Exact PM to be computationally equivalent to the graph isomorphism problem. We demonstrate some natural theoretical properties of the relaxation, and use these properties together with the moment interpretation of \cite{Lasserre} to show that for exact PM problems and for (generic) input shapes which are asymmetric or bilaterally symmetric, the relaxation returns a correct solution of PM.

                For  symmetric shapes, PM has multiple solutions. The non-convex set of optimal solutions of PM is  strictly contained in the convex set of optimal solutions of PM-SDP, so that `most' solutions of PM-SDP will not be solutions of PM. We deal with this  by showing the solution set of PM to be the extreme points of the solution set of PM-SDP, and suggesting a random algorithm which returns a solution of PM with probability one, and returns all solutions of PM with equal probability. We also show these results can be extended to the almost-exact case.
                 To the best of our knowledge, our work is the first to achieve exact recovery in the presence of multiple solutions.
                         }
        \end{abstract}



\section{Introduction}\label{sec:intro}

Shape comparison is a central task in many fields such as computer graphics, computer vision, medical imaging and biology in general. The input of the problem is a pair of shapes, often represented by $d$-dimensional  point clouds  $P\in\Real^{d\times n}$, $Q\in \Real^{d\times n}$, where a point is represented by the  column of a matrix. The goal is to determine how similar the shapes are to each other, typically by computing a distance between shapes which is invariant to a prescribed class of shape preserving deformations.

In rigid shape matching problems, shape preserving deformations are rigid motions, as well as permutations of the point cloud which change the order the points are given but not the shape defined by them. A distance between  shapes for rigid problems can be defined by finding a linear isometry $R \in \OO(d) $, a translation $t \in \RR^d $ and a permutation $X \in \perm_n $ minimizing the Euclidian distance between the point clouds. Centralizing the point clouds causes the optimal translation to be $t=0 $, and we arrive at the following optimization problem:

\begin{subequations} \label{e:procrustes}
        \begin{align}
                \quad \dist^{2}(P,Q)=\min_{X,R} &  \fnorm{RP-QX}^2   \\
                \mathrm{s.t.} & \quad X \in \perm_n \\
                & \quad R \in \OO(d)
        \end{align}
\end{subequations}
We will refer to this optimization problem as the Procrustes matching (PM) problem. We note that both the minimal value and the minimizers are of practical importance. The minimal value is a measure of the similarity of the shapes, while the minimizers $R,X $ enable correct alignment and labeling of the shapes.

For non-rigid shapes, shape preserving deformations  also include intrinsic non-rigid isometries. In computer graphics, a popular strategy   for tackling the non-rigid problem is embedding the shapes in a higher dimensional space, where non-rigid isometries of the shapes are approximated by linear isometries in the high dimensional space (\eg, \cite{ovsjanikov2008global}) . As a result the non-rigid problem can also be formulated as PM, though now $d$ is high dimensional as opposed to the rigid case where typically $d=3 $.

\begin{figure}[t]
        \centering
        \includegraphics[width=0.85\columnwidth]{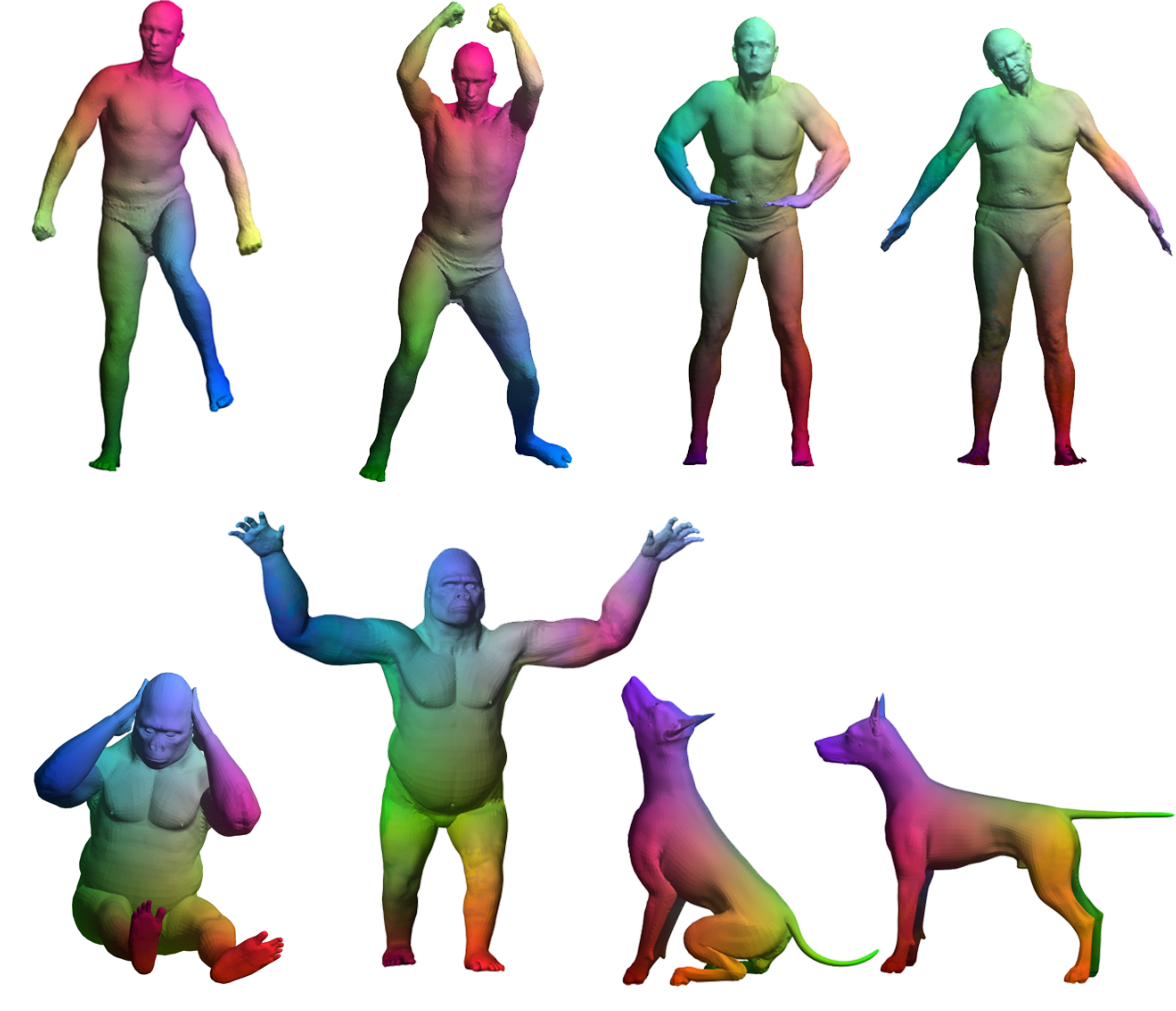}
        \vspace{+0cm}
        \caption{Examples of non-rigid matching results obtained by PM-SDP. Results are visualized by transforming a color map on the source shape (left) to the target shape (right) according to the correspondence found by PM-SDP. }
        \label{fig:hm}
        \vspace{-0.1 cm}
\end{figure}

PM is a non-convex optimization problem and global minimization is difficult. In fact we show that even the subproblem of exact PM is difficult.  Exact PM refers to the problem of deciding whether $\dist(P,Q)=0 $ or not, and the related  problem  of finding minimizers $(R,X)$ to \eqref{e:procrustes} when the Procrustes distance is zero. We show this by proving \vspace{3pt}
\begin{theorem}\label{th:GM}
There is a polynomial reduction from exact GM to exact PM, and vice versa. \vspace{3pt}
\end{theorem}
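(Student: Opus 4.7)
The plan is to exploit a single algebraic observation: for any $R \in \OO(d)$ and $X \in \perm_n$, we have $RP = QX$ if and only if $P^T P = X^T Q^T Q X$. The forward direction is immediate, and the converse follows from the standard fact that two real matrices of matching shape are related by a left-orthogonal factor exactly when their right Gram matrices coincide (apply an SVD and align the left singular vectors). This reduces exact PM, up to recovering $R$ from an SVD, to exhibiting a permutation that simultaneously conjugates two symmetric positive semi-definite matrices.

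For the direction from exact GM to exact PM, given adjacency matrices $A_1, A_2 \in \{0,1\}^{n\times n}$, I would pick $\alpha$ (say $\alpha = n$) so that $B_i := A_i + \alpha I \succeq 0$, let $H = I - \frac{1}{n}\mathbf{1}\mathbf{1}^T$ be the centering projector, and take $P, Q \in \RR^{n\times n}$ to be symmetric square roots of $HB_1 H$ and $HB_2 H$ respectively. Because $HB_iH \mathbf{1} = 0$, the columns of $P$ and $Q$ have zero centroid and so these are valid centered PM inputs. Since any $X \in \perm_n$ satisfies $X\mathbf{1} = \mathbf{1}$ and therefore commutes with $H$, the hypothesis $A_1 = X^T A_2 X$ yields $P^T P = X^T Q^T Q X$, hence $\dist(P,Q) = 0$. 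Conversely, $\dist(P,Q) = 0$ gives $H(B_1 - X^T B_2 X)H = 0$. A short calculation identifies the symmetric kernel of $M \mapsto HMH$ as $\{\mathbf{1}u^T + u\mathbf{1}^T : u \in \RR^n\}$, so $A_1 - X^T A_2 X = \mathbf{1}u^T + u\mathbf{1}^T$; reading the diagonal, and using that adjacency matrices vanish on the diagonal, forces $u = 0$, hence $A_1 = X^T A_2 X$.

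For the reverse direction, I would form $G_P = P^T P$ and $G_Q = Q^T Q$; the key observation reduces exact PM to deciding whether there is a permutation $X$ with $G_P = X^T G_Q X$. This is precisely the isomorphism problem for the complete edge-weighted graphs carrying weight matrices $G_P, G_Q$, and I would then invoke the classical polynomial-time reduction from weighted graph isomorphism to (unweighted) graph isomorphism, replacing each distinct rational weight by a distinguishing rigid gadget attached to the corresponding edge. A specific isomorphism returned by GM supplies the permutation $X$, and the orthogonal $R$ is recovered by the SVD argument above.

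The hard part will be the zero-diagonal argument in the first reduction: centering discards rank-one information, so $H(B_1 - X^T B_2 X)H = 0$ is a priori strictly weaker than $B_1 = X^T B_2 X$, and the vanishing of the diagonal of adjacency matrices is precisely the extra structure needed to close this gap. The second reduction's only nontrivial component is the weighted-to-unweighted GI step, which is standard but must be invoked explicitly; alternatively one could directly cite the known equivalence between permutation-equivalence of symmetric matrices and GI.
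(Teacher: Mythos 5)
Your proposal is correct and rests on exactly the same core observation as the paper: $RP=QX$ forces $P^TP=X^TQ^TQX$, and conversely equality of Gram matrices yields the orthogonal $R$ by uniqueness of factorization; both directions of the reduction then go through $A=P^TP$, $B=Q^TQ$ and the shift $A\mapsto A+\lambda I$ to ensure positive semi-definiteness before factorizing. Where you diverge is in two places where you are more fastidious about definitions than the paper. First, you insist that the constructed point clouds be centered, which costs you the kernel computation for $M\mapsto HMH$ and the zero-diagonal argument to recover $A_1=X^TA_2X$ from $H(B_1-X^TB_2X)H=0$; that argument is correct, but note that the paper's formal statement of PM in \eqref{e:procrustes} places no centering requirement on the inputs (centering is only the motivation for dropping the translation variable), so the paper simply factorizes $A+\lambda I$ directly and skips this entirely. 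Your version buys a reduction whose outputs are legitimate centered point clouds, at the price of working only for zero-diagonal (adjacency) matrices rather than the general symmetric $A,B$ the paper allows in its definition of GM --- which is fine for establishing equivalence with graph isomorphism, but is a slightly different target than the stated GM. Second, in the PM$\to$GM direction you add the weighted-to-unweighted GI gadget step; the paper does not need it because its GM is defined over arbitrary symmetric matrices, so $G_P,G_Q$ are already a valid GM instance. (Both you and the paper gloss over the same genuine loose end: the entries of $P^TP$ and the factorization of $A+\lambda I$ involve real arithmetic, so a fully rigorous polynomial reduction needs a word about rational representations; neither treatment addresses this.)
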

Exact GM is a well studied problem which is not yet known to be either polynomial or NP-complete, although recently it has been shown to be solvable in quasi-polynomial time \cite{DBLP:journals/corr/Babai15}.
In \cite{Haggai} we presented a method for approximating the solution of PM by formulating a semi-definite convex relaxation which we name PM-SDP. The relaxation is constructed using standard methods for semi-definite relaxations of quadratic problems. Using results on semi-definite matrix completion,  the relaxation is reduced to a considerably more efficient SDP relaxation which is equivalent to the original relaxation. The usefulness of PM-SDP for rigid and non-rigid matching problems was demonstrated as well. Some examples of non-rigid shape matching using PM-SDP are presented in Figure~\ref{fig:hm}.

Our main goal in this paper is to theoretically justify the successfulness of PM-SDP on exact or near exact PM problems.
 This successfulness was demonstrated empirically by the non-rigid results in \cite{Haggai} and by experiments presented there which showed that for exact and almost-exact problems the correct solution of PM was retrieved. We show that while solving exact PM is computationally hard in general, under assumptions which are typically valid for  shape matching problems, PM-SDP is guaranteed to correctly solve exact and near-exact PM problems. In particular our assumptions are valid for `most' asymmetric or bilaterally symmetric shapes. The latter class includes many important shape matching instances such as humans and other animals; for example, all the shapes in Figure~\ref{fig:hm} are bilaterally symmetric. For simplicity of the exposition we currently refer to $P$ which satisfy our assumptions as `generic', and  explain our assumptions afterwards.

Apart from the exactness results, we also show some natural properties of  the relaxation. We show invariance of the algorithm to orthogonal transformation and relabeling of the input,  that the $R,X $ coordinates of the solution are always in the convex hull of $\OO(d) \times \perm_n $, and that the objective value $\dConv(P,Q) $ of PM-SDP is always non-negative, and is never larger that $\dist(P,Q)$.

\subsection{Main results}

From the properties of the relaxation we just mentioned it follows that for exact problems

$$0 \leq \dConv(P,Q) \leq \dist(P,Q)=0$$

so that PM-SDP attains the correct objective value for exact problems. Our goal is to show that PM-SDP attains the correct minimizers as well.

For exact problems, we will refer to minimizers of PM as \emph{exact solutions} and will call minimizers of PM-SDP \emph{convex exact solutions}. Note that the set of \emph{convex exact solutions} is a convex set containing the set of exact solutions.  For generic asymmetric point-clouds we show that these two sets are equal: \vspace{3pt}

\begin{theorem}[asymmetric version] \label{th:main}
        Assume $P$ is generic and asymmetric, and $\dist(P,Q)=0$. Then PM-SDP has a unique convex exact solution, which coincides with the unique exact solution of $PM$. \vspace{3pt} 
\end{theorem}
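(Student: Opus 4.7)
The plan is to split the argument into two main steps: first showing that asymmetry of $P$ forces PM to have a unique exact solution, and then using the moment interpretation of the relaxation to show that the unique convex exact solution coincides with it.

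Step 1 (uniqueness of exact solution). Suppose $(R_1, X_1), (R_2, X_2) \in \OO(d) \times \perm_n$ are both exact solutions, so that $R_i P = Q X_i$ for $i = 1, 2$. Eliminating $Q$ yields
\[
R_2^T R_1 \, P \;=\; P \, X_2^T X_1,
\]
which makes $(R_2^T R_1, X_2^T X_1)$ a self-isometry of $P$. The asymmetry assumption --- interpreted as: $R P = P X$ with $R \in \OO(d)$, $X \in \perm_n$ implies $R = I$ and $X = I$ --- then forces $R_1 = R_2$ and $X_1 = X_2$. We denote this unique exact solution by $(R^*, X^*)$.

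Step 2 (uniqueness of convex exact solution). Following the moment interpretation of \cite{Lasserre}, the variables of PM-SDP at any optimizer should be representable as (truncated) moments of a probability measure $\mu$ on $\OO(d) \times \perm_n$. In particular, the SDP objective value equals $\int \fnorm{RP - QX}^2 \, d\mu(R,X)$. Since this equals $\dConv(P,Q) = 0$ by hypothesis, and the integrand is nonnegative on the feasible set, $\mu$ is supported on the set of exact solutions. By Step 1 this support is the singleton $\{(R^*,X^*)\}$, so $\mu = \delta_{(R^*,X^*)}$. All moments of $\mu$ --- in particular the $R$ and $X$ components of the convex exact solution --- are then determined and equal those of $(R^*,X^*)$, giving both the uniqueness claim and the coincidence with the exact solution.

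The main obstacle is Step 2, specifically securing a genuine moment representation for optimizers of PM-SDP that is localized to $\OO(d) \times \perm_n$ rather than some strictly larger convex relaxation of it. This is where the genericity hypothesis on $P$ is expected to enter: one likely needs it to ensure that the equality constraints encoded in PM-SDP (through the matrix-completion reformulation of \cite{Haggai}) rigidify the pseudo-moments sufficiently to extract a bona fide probability measure supported on the intended feasible set. Once that representation is in hand, the passage from a zero objective value to a delta mass, and from the delta mass to a unique convex exact solution, is essentially mechanical.
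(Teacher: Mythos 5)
Your Step 1 is fine and is consistent with what the paper uses implicitly: asymmetry plus $R_2^TR_1P = PX_2^TX_1$ gives uniqueness of the exact solution. The problem is Step 2, and you have correctly identified where it is but not filled it: the claim that an optimizer of PM-SDP ``should be representable as (truncated) moments of a probability measure on $\OO(d)\times\perm_n$'' is precisely the statement that the relaxation is exact, i.e., it is the theorem itself in disguise. The feasible set $\M_F$ of PM-SDP is a strict superset of $\Prob(K)$ in general: a feasible $\mu$ is only a linear functional satisfying $\mu[1]=1$, $\mu[f_s]=0$, and $\mu[p^2]\geq 0$ for affine $p$, and nothing in the construction (including the matrix-completion reformulation, which produces an \emph{equivalent} SDP, not a tighter one) guarantees a representing measure. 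Indeed the paper exhibits instances with eigenvalue multiplicity two where the minimizer is \emph{not} the moment functional of any measure supported on exact solutions, so no generic ``rigidification'' argument of the kind you gesture at can work; the specific hypotheses must be used quantitatively.

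The paper's proof never extracts a representing measure. It works directly with the pseudo-moment functional using only Proposition~\ref{p:lasserre} ($\mu[p^2]=0\Rightarrow\mu[pg]=0$, and $\mu[p^2]\geq\mu[p]^2$). Concretely: after reducing to $P=Q$ with $PP^T=D$ diagonal via the invariance properties, one shows $\mu[\fnorm{RD-DR}^2]=0$, which by the \emph{simple spectrum} assumption forces $\mu[R_{ij}^2]=\delta_{ij}$ and hence $\mu[R]$ diagonal with entries in $[-1,1]$; then one shows $\mu[X_{\ell j}]>0$ implies $RP_j=P_\ell$ for some $R\in\{-1,1\}^d$, and the \emph{faithfulness} assumption upgrades such $R$ to genuine symmetries; summing the identity $\mu[R_{ii}]P_{i1}=\sum_\ell\mu[X_{\ell 1}]R_{ii}(\ell)P_{i1}$ over the faithful column exhibits $\mu[R]$ as a convex combination of exact orthogonal solutions, which in the asymmetric case is the single point $R^*$; finally Lemma~\ref{l:Xrecovery} recovers $\mu[X]=X^*$ from $\mu[R]=R^*$ and double stochasticity. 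Each of your two hypotheses (simple spectrum, faithfulness) enters at a precise step, whereas your proposal uses neither; as written, Step 2 assumes the conclusion.
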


For symmetric $P$,$Q$, there are several exact solutions. Each such solution is also a convex exact solution. In fact, all convex combinations of these solutions will be convex exact solutions as well, so that in the symmetric case convex exact solutions will not necessarily be exact solutions. To deal with this problem we will need some notation:

We denote the set of exact  solutions by $\Iso(P,Q)$
and the projection of this set onto the $R$ coordinate by $\Iso_R(P,Q)$. We refer to members of this set as exact orthogonal solutions. Similarly we denote the set of convex exact solutions by $\N(P,Q)$ and its projection onto the $R$ coordinate by $\N_R(P,Q)$. We refer to the members of this set as convex exact orthogonal solutions. We prove that the extreme points of the set of convex exact orthogonal solutions is exactly the convex hull of the set of exact orthogonal solutions: \vspace{3pt}

    \begin{thm_restated}[Theorem~\ref{th:main} \textup{(symmetric version)}]
        Assume $P$ is generic and $\dist(P,Q)=0$. Then
        $$ \Iso_R= \mathrm{ext}(\N_R) $$
        where $\mathrm{ext}(\N_R)$ denotes the extreme points of $\N_R$. \vspace{3pt}
    \end{thm_restated}

Under the condition of the theorem, we obtain a semi-definite characterization of the convex hull of the optimal set of PM, as the intersection of the feasible set of PM-SDP with the hyperplane defined by constraining the objective to be zero. We can then obtain extreme points (=exact orthogonal solutions) by a random algorithm which maximizes random linear energies over this set. This strategy for obtaining extreme points was suggested by Barvinok in \cite{barvinok1995problems}. We prove that if the linear energies are selected according to the  uniform distribution on the unit sphere, then
\begin{theorem}\label{th:proj}
        The random algorithm returns an exact solution with probability one. Moreover the probability distribution induced on the exact solutions is uniform.
\end{theorem}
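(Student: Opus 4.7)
The plan is to reduce the claim to a geometric statement about the polytope $\N_R$ and then exploit the symmetry group of $P$.

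First I will use the symmetric version of Theorem~\ref{th:main} to identify $\N_R = \mathrm{conv}(\Iso_R)$. To see that this is a polytope I need $\Iso_R$ to be finite. The symmetry group $G := \{S \in \OO(d) : SP = PY \text{ for some } Y \in \perm_n\}$ injects into $\perm_n$ via $S \mapsto Y$ (well-defined since the columns of a generic $P$ are distinct) and is therefore finite. Fixing any $R_0 \in \Iso_R$, a direct calculation shows $R \in \Iso_R$ if and only if $R_0^{-1}R \in G$, so $\Iso_R = R_0 \cdot G$ is a finite set and $\N_R$ is a polytope with vertex set $\Iso_R$.

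Next, for $v$ drawn uniformly from the unit sphere of $(\RR^{d\times d},\langle\cdot,\cdot\rangle_F)$, the maximum of $\langle v, \cdot\rangle$ over $\N_R$ is attained at a vertex. The set of $v$ where two distinct vertices tie is a finite union of hyperplanes, hence a null set on the sphere, so with probability one the maximizer is a unique $R^* \in \Iso_R$. For generic $Q$ with distinct columns, the permutation $X^*$ satisfying $R^*P = QX^*$ is then uniquely determined by column matching, so the algorithm returns an exact solution $(R^*, X^*) \in \Iso(P,Q)$ almost surely.

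For the uniformity claim I exploit the right action of $G$ on $\Iso_R$. For $S \in G$, the map $T_S: A \mapsto AS$ is a linear isometry of $(\RR^{d\times d}, \langle\cdot,\cdot\rangle_F)$ (since $\|AS\|_F = \|A\|_F$ when $S \in \OO(d)$) and bijectively permutes $\Iso_R$, because $(RS)P = R(PY) = (RP)Y \in Q \cdot \perm_n$ whenever $R \in \Iso_R$ and $S \in G$. Writing $C(R^*)$ for the subset of the sphere uniquely maximized at $R^*$, the adjointness $\langle v, AS\rangle = \langle vS^T, A\rangle$ together with the bijectivity of right multiplication by $S$ on $\Iso_R$ yields $C(R^*S) = T_S(C(R^*))$. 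Since $T_S$ preserves the uniform measure on the sphere and $G$ acts transitively on $\Iso_R$ (from $\Iso_R = R_0 \cdot G$), all the cones $C(R^*)$ have equal measure, which is precisely the probability that the algorithm selects the corresponding $R^*$. This gives the claimed uniform distribution over $\Iso(P,Q)$.

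The principal content is supplied by Theorem~\ref{th:main} (the polytope structure and the identification of its vertex set with the exact solutions); the remainder is essentially bookkeeping. The only step that requires some thought is identifying the correct symmetry---the right action $A \mapsto AS$ by $G$---which simultaneously preserves $\N_R$ and acts as an isometry on the ambient Euclidean space, thereby equalizing the normal-cone measures that govern the algorithm's output distribution.
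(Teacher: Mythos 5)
Your proof is correct and follows essentially the same route as the paper: both identify the selection probabilities with the spherical measures of the normal cones at the vertices of $\conv(\Iso_R)$ and equalize them via a measure-preserving linear isometry induced by the symmetry group (you use the right action $A\mapsto AS$, the paper the left action $W\mapsto R(\ell)W$ after reducing to $P=Q$). One minor point: the algorithm's $X$-output is a priori only doubly stochastic, so the final step should invoke uniqueness of the solution of $R^*P=QX$ over doubly stochastic matrices (Lemma~\ref{l:Xrecovery}), not merely uniqueness over permutations by column matching.
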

We also show the random algorithm can be adapted to the almost exact case, and maintain similar theoretical guarantees.

An experiment illustrating our main results is shown in Figure~\ref{fig:sym}.
\begin{figure}[t]
        \centering
        \includegraphics[width=\columnwidth]{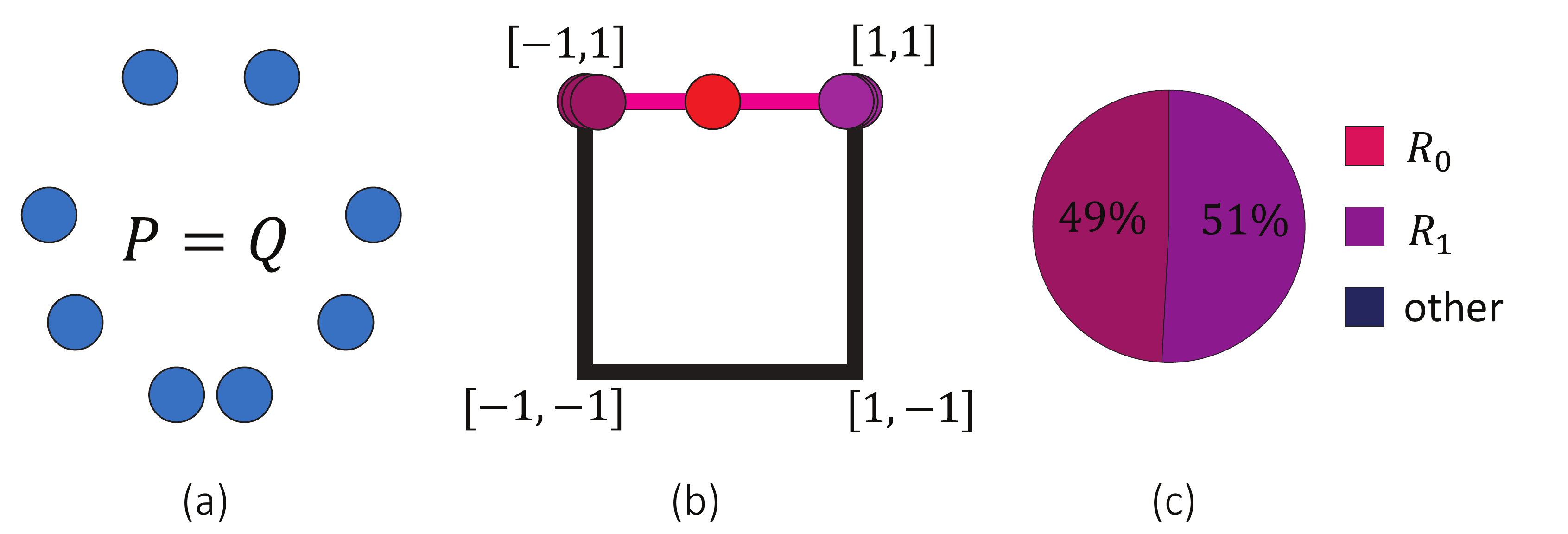}
        \vspace{-0cm}
\caption{An experiment illustrating our main result. We apply PM-SDP to bilaterally symmetric point clouds $P=Q$ seen in (a). PM has two solutions $(R_i,X_i), i=0,1$, where $R_i$ are diagonal matrices with values  $[1,1] $ or $[-1,1]$ on the diagonal. The solution of PM-SDP (the red dot in (b) ) is a convex combination of $R_0 $ and $R_1$ as predicted by Theorem~\ref{th:main}. By applying a random projection the correct solutions of PM (purple dots in (b)) are obtained with equal probability as seen in the pie chart in (c). This is in accordance with Theorem~\ref{th:proj}.}
        \vspace{-0.3 cm}
        \label{fig:sym}
\end{figure}

\paragraph{Related work}
A popular strategy for local minimization of PM is the ICP algorithm \cite{besl1992method,Rusinkiewicz2001} which iteratively solves for $R$ and then $X$, while holding the other coordinate fixed.  ICP enjoys excellent scalability, but requires good initialization to avoid local minima. Another popular strategy is RANSAC \cite{fischler1981random} which repeatedly randomly selects $d+1$ points from each shape and finds the optimal rigid motion between them. The global rigid motion is then found or approximated by applying a voting scheme to a sufficient number of trials. For high dimensional data this method is problematic since its complexity is exponential in $d$.

\cite{DBLP:journals/corr/KhooK15} propose a natural convex relaxation to PM by optimizing over the convex hull of the feasible set $\OO(d) \times \perm_n$. This relaxation is faster than PM-SDP but is provably less accurate (see \cite{Itay} for a similar argument for the graph matching problem). A drawback of this relaxation is that while exact recovery is guaranteed for certain asymmetric shapes, it is also known to fail for centralized point clouds since in this case $R=0,X=\frac{1}{n}\one \one^T$ is always a feasible zero-objective solution.

There are many examples of convex relaxations of non-convex problems for which exact recovery results are available, \eg, \cite{so2007theory,Huang:2013:CSM:2600289.2600314,aflalo2015convex}, often under certain assumptions on the data. The papers mentioned above all address situations in which the non-convex problem has a unique solution and show that the relaxation has a unique solution as well. To the best of our knowledge our treatment here is the first proof of exact recovery for a relaxation of symmetric problems which possess multiple solutions.

\subsection{Conditions} We now explain our assumptions on the point cloud $P$. A weak assumption we use is that $P$ does not include the same point twice. Our main assumption is  that the spectrum of the matrix $PP^T$ is simple (all eigenvalues have unit multiplicity). This assumption implies that exact orthogonal symmetries are compositions of reflections with respect to the principal axes of $P$. In other words, in the principal axes basis an orthogonal symmetry is diagonal, with diagonal entries in $\{-1,1\}$. In particular all symmetries are bilateral ($R^2=I_d$). We say the $R$ is a member of  $\{-1,1\}^d$ if it is such a diagonal matrix.

The simple spectrum assumption and the weaker bounded eigenvalue multiplicity assumption are common assumptions in the exact graph matching literature \cite{bounded_eig,aflalo2015convex}. It is known that in this case exact GM can be solved by an algorithm which is polynomial  in the input size but exponential in the eigenvalue multiplicity. This suggests that exact recovery for PM-SDP should fail at some level of eigenvalue multiplicity. Indeed we provide  examples where such failures occur even for multiplicity two.

Our third and final assumption is the faithfulness assumption. We say a point $P_j$ is \emph{faithful}, if for any  $R \in \{-1,1\}^d $ for which $RP_j $ is a point of $P$, $R$ is an orthogonal symmetry. We say that $P$ is faithful if it contains at least one faithful point.

An illustration of  the faithfulness condition is given in the inset.  The image on the left of the inset shows a shape from the Scape dataset \cite{scape} which when sampled has a faithful point marked in green.
\begin{wrapfigure}{r}{0.30\textwidth}
\vspace{0 cm}\hspace{-0.2 cm}
     \includegraphics[width=0.30\textwidth]{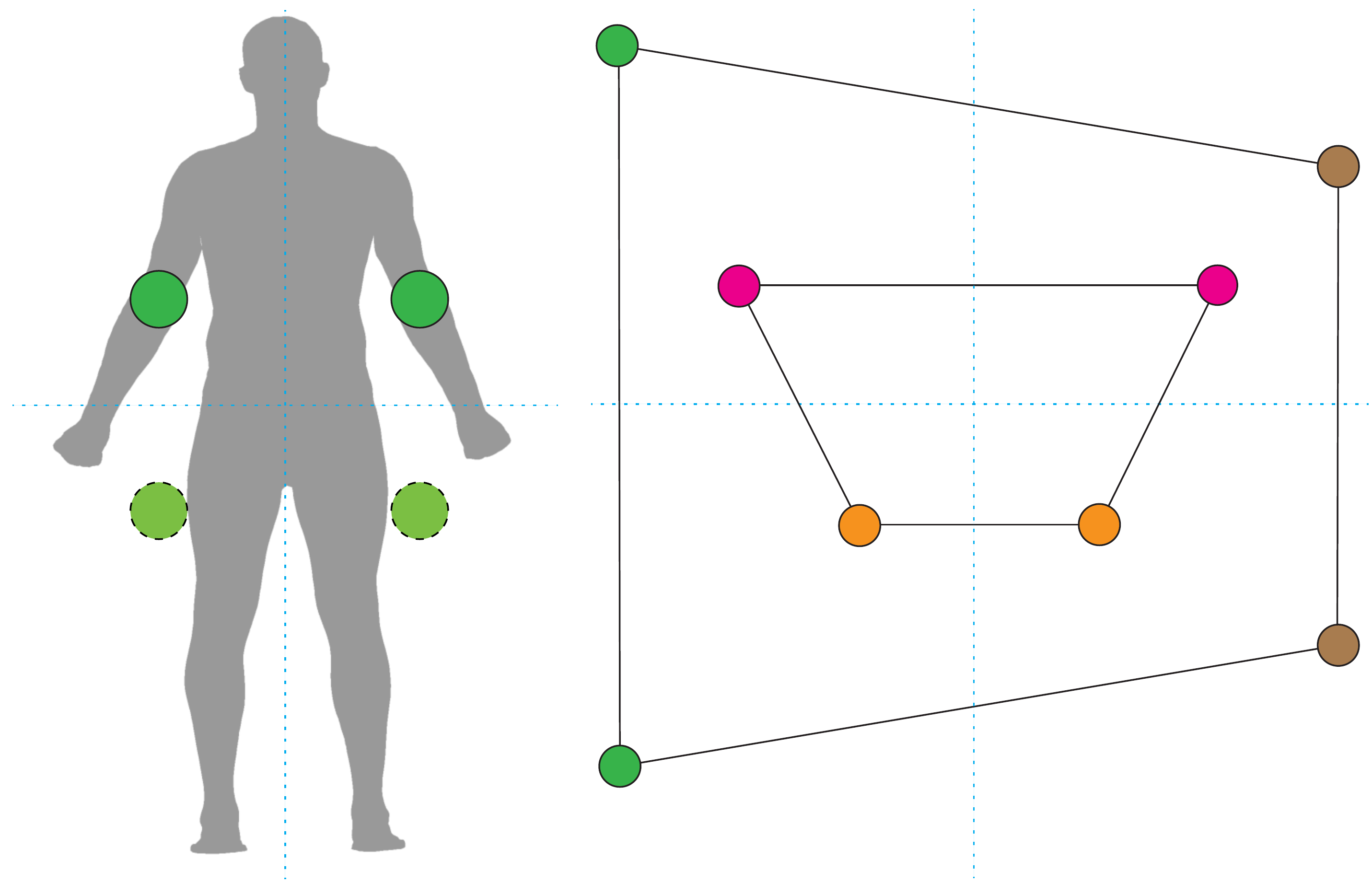}
  \vspace{-0.2 cm}
\end{wrapfigure}
Applying the reflectional symmetry $R \in \Iso_R(P,P) $ to the point in green gives another green point on the shape, while applying $R \in \{-1,1 \}^2 $ which aren't  symmetries gives points which aren't on the shape (dotted, light green).  The image on the right shows a  synthetic example of a point-cloud where all points are unfaithful, since the point-cloud is asymmetric but each point can be reflected to a different point in the point cloud.

Our experiments show that PM-SDP is able to correctly solve the unfaithful exact PM problem displayed in the inset, as well as similar randomly generated problems. Thus, in contrast to the simple spectrum condition which is tight, we do not currently know whether the faithfulness condition can be removed or replaced by a weaker condition.

\subsection{Paper organization}
We begin by presenting our SDP relaxation for PM in Section~\ref{sec:relaxation}.  We prove the theoretical properties of the relaxation in Section~\ref{sec:properties} and use them in Section~\ref{sec:exactRecovery} to prove Theorem~\ref{th:main}. In Section~\ref{sec:extreme} we describe the random algorithm for obtaining extreme points and prove Theorem~\ref{th:proj}. We then explain how this result can be extended to accommodate near exact problems. In Section~\ref{sec:exp} we present some experiments which illustrate our results, and examine the behavior of PM-SDP for exact problems which don't meet the conditions of our analysis. Finally we  show the equivalence of exact graph matching and exact PM in Section~\ref{sec:complexity}.

\subsection{Notation}
We denote the set $\{1,\ldots,n \}$ by $[n]$.

We denote polynomials of degree $\leq r $ in $x=(x_1,\ldots,x_{N}) $ by $\PP_r(x)$.

We denote the feasible set of \eqref{e:procrustes} by
$G=\OO(d) \times \perm_n $.

We denote the $j$-th column of a matrix $X \in \RR^{n \times n} $ by $X_j\in \RR^{n \times 1}$,  and the $i$-th row by $X_{i \star} \in \RR^{1 \times n}$. Expressions such as $X_j^T $ should be interpreted as $(X_j)^T $ (as opposed to $(X^T)_j$).

We denote by $\1$ the vector $\1=(1,1,\ldots,1)^T\in \RR^{n \times 1}$. All vectors are column vectors unless stated otherwise.

Real symmetric  $n\times n$ matrices are denoted by $\S(n)$. We use $A\succeq 0$ to denote positive semi-definite matrices.

\section{Convex relaxation}\label{sec:relaxation}
PM can be formulated using quadratic polynomials in the entries of $R,X$. For generality of the exposition, as well as for notational convenience, we will now review  a common strategy for semi-definite relaxation of quadratic optimization problems (see \cite{luo2010semidefinite} for a review) and will then state the formulation of the PM-SDP relaxation from \cite{Haggai}. We present the relaxation using the  terminology of \cite{Lasserre}, which we find helpful for noting properties of the relaxation which are otherwise less apparent.

\subsection{SDP relaxations for quadratic optimization problems}\label{sub:quadratic}
We consider quadratic optimization problems of the form
\begin{subequations}\label{e:general_quadratic}
        \begin{align}
        \min_{x \in \RR^N} & \quad f_0(x)   \\
        \mathrm{s.t.} & \quad f_s(x)=0, \quad  s=1, \ldots, S \label{e:quad_eq}
        \end{align}
\end{subequations}
where $f_s$, $s=0,\ldots, S$ are quadratic polynomials. We denote the set of points $x$ satisfying \eqref{e:quad_eq} by $K$. We will assume $K$ is compact. This non-convex optimization problem can be rephrased as an equivalent convex problem over the infinite dimensional space $\Prob(K)$ of probability measures supported in $K$:
\begin{align}\label{e:infinite}
\min_{\mu \in \Prob(K)} \mu[f_0]=\int f_0 d\mu
\end{align}
The equivalence is in the sense that the optimal values of both problems are equal, and  each solution $x$ of \eqref{e:general_quadratic} defines a solution $\delta_x$ of \eqref{e:infinite}. Solutions of \eqref{e:infinite} can also be (possibly continuous) convex combinations of such solutions $\delta_x$.
We use this equivalence to construct a finite dimensional convex relaxation of \eqref{e:general_quadratic}. Instead of considering $\Prob(K)$ we will consider a superset $\M$ consisting of (not necessarily positive) linear functionals on $C(K)$. A member $\mu\in\M$ must fulfill the following conditions:
\begin{enumerate}
\item $\mu[1]=1$
\item For all $s$, $\mu[f_s]=0 $, for $s=1,\ldots , S$.
\item For all $p \in \PP_1(x)$, $\mu[p^2]\geq 0 $. If $\mu$ satisfies this property we write $\mu \succeq 0 $.
\end{enumerate}
Note that indeed $\Prob(K)\subset \M$ since every probability measure on $K$ satisfies these conditions; in fact it satisfies the last condition for every non-negative integrable function.
Applying this relaxation to \eqref{e:infinite} gives
\begin{subequations}\label{e:general_relaxation}
        \begin{align} \label{e:general_relaxation_1}
        \min_{\mu \in \M} & \quad \mu[f_0]   \\ \label{e:general_relaxation_2}
        \mathrm{s.t.} & \quad \mu[1]=1\\ \label{e:general_relaxation_3}
        & \quad \mu[f_s]=0, \quad  s=1 \ldots S \\ \label{e:general_relaxation_sdp}
        & \quad \mu \succeq 0
        \end{align}
\end{subequations}
To see this is indeed a finite dimensional semi-definite program note that in \eqref{e:general_relaxation} $\mu$ is only applied to polynomials in $\PP_2(x)$ and therefore the unknowns of the problem are \begin{equation} \label{e:vars}
\left(\mu[1],\mu[x_1],\ldots,\mu[x_N],\mu[x_1^2],\mu[x_1x_2],\ldots,\mu[x_N^2] \right)
\end{equation}
Equations \eqref{e:general_relaxation_1}-\eqref{e:general_relaxation_3} are all linear  in the unknowns, while \eqref{e:general_relaxation_sdp} is a semi-definite constraint as explained next. For $p,q \in \PP_1(x) $
$$p(x)=\vec{p}^T \begin{pmatrix}1 \\
x \\
\end{pmatrix} \quad,
\quad
q(x)=\vec{q}^T \begin{pmatrix}1 \\
x \\
\end{pmatrix}  $$
where $\vec{p},\vec{q} $ are the coefficient vectors of $p,q$.
Then
\begin{align}\label{e:fun2mat}
\mu[pq]=\mu\left[\vec{p}^T \begin{pmatrix}1 \\
x \\
\end{pmatrix}
 \begin{pmatrix}1
 \; x^T \\
\end{pmatrix}
\vec{q}
 \right]
 =
 \vec{p}^T\mu\left[ \begin{array}{cc}
1 & x^T \\
x & xx^T
\end{array}
 \right]\vec{q} \nonumber  \end{align}
Where for a function $F:\RR^{N} \to \RR^{n \times k}$ such that $F_{ij}(x)\in \PP_2(x)$ we define $\mu[F]$ by applying $\mu$ to each coordinate, \ie, $\mu[F]_{ij}=\mu[F_{ij}]$.
It follows from \eqref{e:fun2mat} that
$$\mu \succeq 0 \text{ if and only if } \mu\left[ \begin{array}{cc}
1 & x^T \\
x & xx^T
\end{array}
\right] \succeq 0  $$
We denote the set of feasible solutions of \eqref{e:general_relaxation} by $\M_F$.
\subsection{Properties} We present some simple but important consequences of the discussion above:
\begin{proposition}\label{p:lasserre}
Assume $\mu \in \M_F$ and $p \in \PP_1(x) $,
\begin{enumerate}
\item If $\mu[p^2]=0$ then $\mu[pg]=0$ for all $g\in\PP_1(x)$.
\item  $ \mu[p^2] \geq \mu[p]^2$.
\end{enumerate}
\end{proposition}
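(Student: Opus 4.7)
The plan is to view both statements as consequences of the fact that the constraint $\mu \succeq 0$ endows $\PP_1(x)$ with a positive semi-definite symmetric bilinear form $B(p,q) = \mu[pq]$. Bilinearity follows from the linearity of $\mu$ as a functional (together with $pq \in \PP_2(x)$, so the expression is well-defined), and positive semi-definiteness is precisely condition~3 in the definition of $\M$. Once this is set up, both parts become standard Cauchy--Schwarz-type facts about PSD forms.

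For part~1, I would exploit the fact that $\PP_1(x)$ is closed under linear combinations. Given $p$ with $\mu[p^2]=0$ and any $g \in \PP_1(x)$, the polynomial $tp + g$ lies in $\PP_1(x)$ for every $t \in \RR$, so
\begin{equation*}
0 \leq \mu[(tp+g)^2] = t^2\mu[p^2] + 2t\mu[pg] + \mu[g^2] = 2t\mu[pg] + \mu[g^2].
\end{equation*}
A linear function of $t$ that is non-negative on all of $\RR$ must have zero slope, forcing $\mu[pg]=0$.

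For part~2, I would use the same expansion trick with the constant polynomial $1 \in \PP_1(x)$, which plays a special role because constraint~\eqref{e:general_relaxation_2} gives $\mu[1]=1$. Specifically, for any $c \in \RR$ the polynomial $p - c \cdot 1 \in \PP_1(x)$, so applying $\mu \succeq 0$ and expanding yields
\begin{equation*}
0 \leq \mu[(p-c)^2] = \mu[p^2] - 2c\,\mu[p] + c^2.
\end{equation*}
Minimizing the right-hand side over $c$ (at $c = \mu[p]$) gives $\mu[p^2] \geq \mu[p]^2$. Equivalently, this is just Cauchy--Schwarz for the form $B$ applied to the pair $(p,1)$, using $B(1,1)=\mu[1]=1$.

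I do not anticipate a genuine obstacle here; the proof is essentially bookkeeping to verify that each intermediate polynomial lies in $\PP_1(x)$ so that the PSD condition applies to its square. The only subtle point worth flagging is the role of the normalization $\mu[1]=1$ in part~2, which is what allows the second-degree term in $c$ to have coefficient exactly $1$ and hence the minimizer to be $c = \mu[p]$.
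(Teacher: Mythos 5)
Your proof is correct and follows essentially the same route as the paper's: both apply the condition $\mu \succeq 0$ to squares of a parametrized affine combination of $p$ with $g$ (respectively with the constant $1$) and then optimize or pass to a limit in the parameter. The paper packages both parts into the single inequality $2|\mu[pg]| \leq c^{-2}\mu[p^2]+c^2\mu[g^2]$ (taking $c\to 0$ for part 1 and $c=1$, $g=\sqrt{\mu[p^2]}$ for part 2), but this differs from your discriminant and completing-the-square arguments only cosmetically.
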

\begin{proof}
For any positive $c$,
$$\mu\left[(c^{-1}p\pm cg)^2 \right]\geq 0 $$
which implies that
\begin{equation*}
2|\mu[pg]|\leq c^{-2}\mu[p^2]+c^2\mu[g^2]
\end{equation*}
The first claim follows by taking $c \rightarrow 0 $. The second claim follows by choosing $c=1$ and taking $g $ to be the constant function $g=\sqrt{\mu[p^2]} $.
\end{proof}

An immediate consequence is that if $p_1,\ldots,p_k \in \PP_1(x) $ all satisfy $\mu[p_i^2]=0 $, then $\mu[f]=0$ for all members $f$ of the vector space
$$\langle p_1,\ldots,p_k \rangle_2=\{f|f=\sum_{i=1}^k p_ig_i, \quad g_i \in \PP_1(x) \} $$
\subsection{Convex relaxation of the Procrustes problem} The PM problem is equivalent to the quadratic optimization problem:
\begin{subequations}\label{e:PM_full_quadratic_formulation}
        \begin{align}
        \min_{X,R} & \quad \fnorm{RP-QX}^2   \\
        \mathrm{s.t.}
         & \quad X \one =\one \ \ ,  \ \  \one^TX=\one^T  \\
        &\quad X_jX_j^T=\diag \left( X_j \right), \quad j=1\ldots n   \label{e:quadDiag}\\
        &\quad RR^T=I_d\ \ ,  \ \   R^TR=I_d
        \end{align}
\end{subequations}
where for a vector $v$, $\diag (v)$ is the diagonal matrix whose diagonal entries are the entries of $v$. We note that \eqref{e:quadDiag} follows from the fact that the entries of each column $X_j $ are in $\{0,1 \} $, and each column has only one non-zero entry. The equivalence of PM with \eqref{e:PM_full_quadratic_formulation} is explained in full detail in  \cite{Haggai}.

Applying the semi-definite relaxation described in the previous section to the quadratic formulation of PM we obtain our formulation for PM-SDP:
\begin{subequations}\label{e:PM_SDP}
        \begin{align}
        \dConv^{2}(P,Q)=\min_{\mu} & \quad \mu\brac{\fnorm{RP-QX}^2}  \label{e:obj} \\
        \mathrm{s.t.}
         & \quad \mu[1]=1\\
         & \quad \mu[X] \one =\one \ \ ,  \ \  \one^T \mu[X]=\one^T  \label{e:X1}\\
        &\quad \mu[X_j X_j^T]=\diag\left( \mu[X_j] \right), \quad  j=1\ldots n \label{e:diag} \\
        &\quad \mu[RR^T]=I_d\ \ ,  \ \   \mu[R^TR]=I_d \label{e:orth} \\
        &\quad \mu \succeq 0 \label{e:PMSDP_psd}
        \end{align}
\end{subequations}

 An important property of PM-SDP (which is not needed for our discussion here) is that \eqref{e:PMSDP_psd} can be replaced with the semi-definite constraints
\begin{equation}\label{e:small_psd}
 \mu\left[ \begin{array}{cc}
 1 & x_j^T \\
 x_j & x_jx_j^T
 \end{array}
 \right] \succeq 0  , \quad j=1,\ldots,n
\end{equation}
to obtain an \emph{equivalent} relaxation. Here $x_j$ is a vector containing the entries of $R$ and the column  $X_j$. In applications often $n>>d $, and in this case the $\bigO(n+d^2) $ sized matrices involved in \eqref{e:small_psd} are considerably smaller than the $\bigO(n^2+d^2) $ matrix in \eqref{e:PMSDP_psd}. As a result the equivalent relaxation is considerably more efficient than \eqref{e:PM_SDP}. For an explanation of the equivalence see \cite{Haggai}.
\section{Properties of relaxation}\label{sec:properties}
We present some natural properties of the relaxation, which will be helpful for the proof of our main theorem. We begin by presenting two consequences from our discussion above.\vspace{3pt}
\begin{proposition}\label{p:first}
For $\mu \in \MF$, the objective \eqref{e:obj} satisfies $$\mu\brac{\fnorm{RP-QX}^2}\geq \fnorm{\mu[R] P-Q\mu[X]}^2$$
 In particular $\dConv$ is bounded from below by zero, and if $\dConv(P,Q)=0$ then
$$\mu[R]P=Q\mu[X] $$
\end{proposition}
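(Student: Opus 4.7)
The plan is to leverage Proposition~\ref{p:lasserre}(2) applied entrywise to the linear polynomials making up $RP - QX$. Observe that each entry $(RP - QX)_{ik}$ is a polynomial of degree $\leq 1$ in the entries of $R$ and $X$, since it equals $\sum_\ell R_{i\ell} P_{\ell k} - \sum_\ell Q_{i\ell} X_{\ell k}$, a linear combination of the unknowns (with $P, Q$ being fixed data). Thus $(RP - QX)_{ik} \in \PP_1(x)$ and Proposition~\ref{p:lasserre}(2) applies to give $\mu[(RP-QX)_{ik}^2] \geq \mu[(RP-QX)_{ik}]^2$.

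Next, I would sum this inequality over all entries $i \in [d], k \in [n]$ and use linearity of $\mu$ in two ways. On the left, $\sum_{i,k} \mu[(RP-QX)_{ik}^2] = \mu\bigl[\sum_{i,k}(RP-QX)_{ik}^2\bigr] = \mu[\fnorm{RP-QX}^2]$. On the right, $\mu[(RP-QX)_{ik}] = (\mu[R]P - Q\mu[X])_{ik}$ by linearity of $\mu$ together with the fact that the columns of $P$ and rows of $Q$ are constants pulled outside of $\mu$. Summing the squared entries gives $\fnorm{\mu[R]P - Q\mu[X]}^2$, establishing the main inequality.

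The non-negativity of $\dConv$ then follows because for every $\mu \in \MF$ the objective is bounded below by $\fnorm{\mu[R]P - Q\mu[X]}^2 \geq 0$, hence the same bound holds for the infimum. For the final claim, if $\dConv(P,Q) = 0$ then there exists a feasible $\mu$ (or, by taking an optimizer — which exists as the feasible set is compact and the objective continuous) with $\mu[\fnorm{RP-QX}^2] = 0$. Combining with the inequality just derived yields $0 \geq \fnorm{\mu[R]P - Q\mu[X]}^2 \geq 0$, so $\mu[R]P = Q\mu[X]$ as claimed.

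No step is particularly deep; the only mild subtlety is being careful that $P$ and $Q$ are data (not variables) so they commute freely with $\mu$, and that we may invoke Proposition~\ref{p:lasserre}(2) componentwise rather than for the whole Frobenius norm. The rest is essentially an application of Jensen-type inequality in the Lasserre moment setting.
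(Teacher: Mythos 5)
Your proof is correct and follows essentially the same route as the paper: decompose $\fnorm{RP-QX}^2$ into the squares of the affine entries $p_{ik}=(RP-QX)_{ik}\in\PP_1(x)$, apply Proposition~\ref{p:lasserre}(2) entrywise, and sum using linearity of $\mu$. The concluding steps for non-negativity of $\dConv$ and the exact case also match the paper's (brief) treatment.
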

\begin{proof}[Proof of proposition~\ref{p:first}]
We note that $\fnorm{RP-QX}^2 $ can be rewritten as $\sum_{i,j} p_{ij}^2$ for $p_{ij}=(RP-QX)_{ij}\in\PP_1(x)$. Using Proposition \ref{p:lasserre} we obtain

\begin{equation*}
\begin{aligned}
\mu\brac{\fnorm{RP-QX}^2}=\sum_{i,j}\mu\left(p_{ij}^2\right)\geq \sum_{i,j}\mu\left(p_{ij}\right)^2 = \fnorm{\mu[R] P-Q\mu[X]}^2 \geq 0
\end{aligned}
\end{equation*}
\end{proof}
\begin{proposition}\label{p:second}
$\mu[R], \mu[X]$ are in the convex hull of the orthogonal transformations and permutations, respectively.
\end{proposition}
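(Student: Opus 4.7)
The plan is to exploit the classical characterizations of the two convex hulls: by Birkhoff--von Neumann, $\conv(\perm_n)$ is the set of doubly stochastic matrices, and $\conv(\OO(d))$ is the operator-norm unit ball. So the proof splits into (i) showing $\mu[X]$ is doubly stochastic, and (ii) showing $\|\mu[R]\|_{\mathrm{op}} \leq 1$.

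For (i), the row and column sum conditions $\mu[X]\one = \one$ and $\one^T \mu[X] = \one^T$ are exactly constraint \eqref{e:X1}, so those are free. The only thing to establish is that each entry $\mu[X_{ij}]$ is non-negative. Here I would read off the diagonal of constraint \eqref{e:diag}: the $(i,i)$ entry of $\mu[X_j X_j^T] = \diag(\mu[X_j])$ gives $\mu[X_{ij}^2] = \mu[X_{ij}]$. Applying the second claim of Proposition~\ref{p:lasserre} to the linear polynomial $p = X_{ij}$ yields $\mu[X_{ij}] = \mu[X_{ij}^2] \geq \mu[X_{ij}]^2$, so $\mu[X_{ij}] \in [0,1]$. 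Combined with the row/column sums, $\mu[X]$ is doubly stochastic.

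For (ii), I would fix an arbitrary unit vector $u \in \RR^d$ and for each $k \in [d]$ define the linear polynomial $p_k(R,X) = u^T R_{\cdot k} = \sum_i u_i R_{ik} \in \PP_1$. By the definition of $\mu[\cdot]$ on polynomial expressions,
\begin{equation*}
\sum_{k=1}^d \mu[p_k^2] = \sum_k u^T \mu[R_{\cdot k} R_{\cdot k}^T] u = u^T \mu[R R^T] u = u^T I_d u = 1,
\end{equation*}
where I used constraint \eqref{e:orth}. On the other hand, $\mu[p_k] = u^T \mu[R] e_k$, so
\begin{equation*}
\|\mu[R]^T u\|^2 = \sum_{k=1}^d \mu[p_k]^2 \leq \sum_{k=1}^d \mu[p_k^2] = 1,
\end{equation*}
the inequality being the second item of Proposition~\ref{p:lasserre} applied termwise. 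Taking the supremum over unit $u$ gives $\|\mu[R]\|_{\mathrm{op}} \leq 1$.

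The main obstacle, if any, is making sure the operator-norm bound argument uses the right half of the orthogonality constraint (I chose $\mu[RR^T] = I_d$ to match $u^T \mu[R_{\cdot k} R_{\cdot k}^T] u$ summed over $k$; one could dually use $\mu[R^T R] = I_d$ with rows in place of columns to control $\|\mu[R] u\|$, which is $\|\mu[R]\|_{\mathrm{op}}$ on the other side). Everything else is routine: the permutation half is essentially a direct reading of the constraints plus the $\mu[p^2] \geq \mu[p]^2$ inequality, and the orthogonal half is a one-line Cauchy--Schwarz-type argument via Proposition~\ref{p:lasserre}.
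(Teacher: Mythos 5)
Your proposal is correct and follows essentially the same route as the paper: Birkhoff--von Neumann plus entrywise non-negativity from \eqref{e:diag} for $\mu[X]$, and an operator-norm bound via Proposition~\ref{p:lasserre} and \eqref{e:orth} for $\mu[R]$ (the paper bounds $\|\mu[R]v\|$ using $\mu[R^TR]=I_d$ rather than $\|\mu[R]^Tu\|$ using $\mu[RR^T]=I_d$, but these are equivalent, as you note). The only cosmetic difference is that for non-negativity of $\mu[X_{ij}]$ the paper simply uses $\mu[X_{ij}]=\mu[X_{ij}^2]\geq 0$ from the positivity condition, whereas you invoke the slightly stronger inequality $\mu[p^2]\geq\mu[p]^2$.
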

\begin{proof}[Proof of proposition~\ref{p:second} ]
We begins by showing that $\mu[X]$ is in the convex hull of $\perm_n$, \ie, that $\mu[X]$ is doubly stochastic. The rows and columns of $\mu[X]$ are constrained to sum to one in \eqref{e:X1}, and each coordinate of $\mu[X]$ must be non-negative since by \eqref{e:diag}
$$\mu[X_{ij}] = \mu[X_{ij}^2]\geq 0$$
The convex hull of orthogonal matrices are matrices whose $2$-norm is not larger than one. For arbitrary  $v\in \RR^d$ using Proposition~\ref{p:lasserre} and \eqref{e:orth} we have
\begin{align*}
\norm{\mu[R]v}_2^2 \leq\mu\left[\norm{Rv}_2^2\right]=\mu\left[v^TR^TRv\right]=v^T\mu[R^TR]v=\norm{v}_2^2
\end{align*}
and therefore $\norm{\mu[R]}_2\leq 1$. \end{proof}

\subsection{Compactness} We show that $\MF$ is compact. To show that all coordinates of a semi-definite matrix are bounded, it is sufficient to show that its trace is bounded. Therefore for general quadratic relaxations it is sufficient to show that  $\sum_i \mu[x_i^2] $ is bounded. In our case $x$ consists of the coordinates of $R,X$, and so the compactness of $\MF$ follows from
\begin{align*}
\sum_{ij} \mu[X_{ij}^2]+\sum_{k,\ell}\mu[R_{k\ell}^2]=\sum_{ij} \mu[X_{ij}]+\sum_k\mu[(RR^T)_{kk}]
=n+d
\end{align*}

\subsection{Invariance to coordinate change and reordering} The Procrustes distance $\dist(P,Q)$ is invariant to orthogonal change of coordinates and reordering of the points, that is given $(R_0,X_0),(R_1,X_1)\in G$ we have
\begin{equation}\label{e:equivalence}
\dist(P,Q)=\dist(\hat{P},\hat{Q})
\end{equation}
where $\hat{P}=R_0 P X_0$ and $\hat{Q}= R_1 Q X_1$. We now show that our convex approximation $\dConv$  satisfies \eqref{e:equivalence} as well.

We define
$$(R_0,R_1,X_0,X_1)_* (\mu)=\hat{\mu} $$
where
$$\hat{\mu}[p(R,X)]=\mu[p(R_1RR_0^T,X_1^TXX_0)] $$
It can be verified that if $\mu \in \MF$ then $\hat{\mu}\in \MF$ as well. Also note that $(R_0,R_1,X_0,X_1)_*$ is a linear map whose inverse is $(R_0^T,R_1^T,X_0^T,X_1^T)_*$.
Finally, note that
$$\fnorm{R\hat{P}-\hat{Q}X}^2=\fnorm{(R_1^T R R_0) P - Q (X_1 X X_0^T)}^2$$ which implies that
$$\hat{\mu}\left[\fnorm{R\hat{P}-\hat{Q}X}^2 \right]=\mu \left[\fnorm{RP-QX}^2 \right] $$
It follows that \eqref{e:equivalence} holds, and there is a linear isomorphism taking the minimizers of PM-SDP$(P,Q)$ to the minimizers of PM-SDP$(\hat{P},\hat{Q})$.

\section{Exact recovery}\label{sec:exactRecovery}
In this section we prove our main theorem (Theorem~\ref{th:main}). We are given $P,Q$ which satisfy the conditions of the theorem. Our goal is to show that the set of exact orthogonal solutions $\Iso_R(P,Q)$ is equal to the extreme points of the set of convex exact orthogonal solutions $  \N_R(P,Q)  $. In fact it is sufficient to prove that the convex hull of $\Iso_R(P,Q)$ is equal to $\N_R(P,Q)$. This is because the extreme points of $\conv \Iso_R(P,Q)$ are precisely $\Iso_R(P,Q)$, which in turn follows from the fact that all orthogonal matrices are extreme points of $\conv(\OO(d))$.

 The inclusion $\conv \Iso_R(P,Q)\subseteq   \N_R(P,Q) $ follows immediately from the fact that each exact orthogonal solution is also a convex exact orthogonal solution.

The proof of the  opposite inclusion $  \N_R(P,Q) \subseteq \conv \Iso_R(P,Q) $ is more involved. The remainder of this section is devoted to proving this inclusion.

We first note that using the invariance of PM-SDP to applying permutations and orthogonal transformations we can assume w.l.o.g. that $P=Q$ since by assumption $\dist(P,Q)=0$. Additionally we can assume that $PP^T$ is a  diagonal matrix denoted by $D$. To see this we use the spectral decomposition
$$UPP^TU^{T}=D $$
and note that $\hat P=UP $ satisfies $\hat P \hat P^T=D $.

By multiplying the equation $RP=PX $ with its transpose it can be  seen that all exact solutions $(R,X) \in \Iso(P,P) $ satisfy $RD=DR$. In the simple spectrum case this implies that $R$ is diagonal and $R_{jj}\in \{-1,1 \} $. Thus the symmetry group of $P$ can be identified with a subgroup of $\{-1,1\}^d $. The proof of the theorem is based on the following generalization of these properties of exact solutions to exact convex solutions:
\begin{lemma}\label{l:diagonal}
Assume $\mu \in \N$. Then
\begin{enumerate}
\item $\mu\left[ \norm{RD-DR}_F^2 \right]=0$.
\item    $\mu \left[R_{ij}^2 \right]=\delta_{ij}$.
\item $\Rmu$ is diagonal, and $\Rmu_{jj}\in [-1,1] $.
\item If $\mu[X_{\ell j}]>0 $ then there is some $R \in \{-1,1 \}^d$ such that $RP_j=P_\ell $.
\end{enumerate}
\end{lemma}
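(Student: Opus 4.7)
The plan is to prove item~(2) first; the remaining items are short corollaries. Since $\mu\in\N$ attains the zero objective, $\mu[\|RP-PX\|_F^2]=0$, so Proposition~\ref{p:lasserre}(1) gives $\mu[(RP-PX)_{ij}\,g]=0$ for every $g\in\PP_1(x)$. Applying this twice, first with $g=(RP)_{kl}$ and then with $g=(PX)_{kl}$, bootstraps the first-moment vanishing into the second-moment identity $\mu[(RP)_{ij}(RP)_{kl}]=\mu[(PX)_{ij}(PX)_{kl}]$. Contracting over $j=l$ yields $\mu[(RP)(RP)^T]=\mu[(PX)(PX)^T]$; the left side is $\mu[RDR^T]$, and on the right side, combining \eqref{e:diag} (which gives $\mu[X_{ij}X_{kj}]=\delta_{ik}\mu[X_{ij}]$) with $X\1=\1$ gives $\mu[XX^T]=I_n$, so it collapses to $PP^T=D$. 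Thus $\mu[RDR^T]=D$.

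Set $M_{ij}:=\mu[R_{ij}^2]$. By Proposition~\ref{p:lasserre}(2), $M_{ij}\geq 0$, and by the constraints $\mu[RR^T]=\mu[R^TR]=I_d$ the row and column sums of $M$ equal $1$, so $M$ is doubly stochastic. Reading off the diagonal of $\mu[RDR^T]=D$ gives $\sum_k d_k M_{ik}=d_i$, i.e.\ $Md=d$ for $d=(d_1,\ldots,d_d)^T$. Interpreting $M$ as the transition matrix of a finite doubly stochastic Markov chain, any right-eigenvector for eigenvalue $1$ must be constant on each communicating class; by the simple spectrum assumption the entries of $d$ are all distinct, so every communicating class is a singleton, forcing $M=I_d$. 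This is item~(2). Item~(1) is then immediate from $(RD-DR)_{ij}=(d_j-d_i)R_{ij}$, since the off-diagonal $\mu[R_{ij}^2]$ vanish. Item~(3) follows from Proposition~\ref{p:lasserre}(2): $\mu[R_{ij}]^2\leq\mu[R_{ij}^2]=\delta_{ij}$, so $\mu[R]$ is diagonal with entries in $[-1,1]$.

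For item~(4), the idea is to derive a fixed-point equation for the vector $v^{(i)}:=(P_{i1}^2,\ldots,P_{in}^2)^T$ at each fixed $i$, and then reapply the Markov-chain argument. By item~(2), $\mu[R_{ik}^2]=0$ for $k\neq i$, so Proposition~\ref{p:lasserre}(1) forces $\mu[R_{ik}\,g]=0$ for every $g\in\PP_1(x)$ and $k\neq i$. Combining this with \eqref{e:diag}, a direct expansion of $\mu[(RP-PX)_{ij}^2]=0$ produces $\mu[(RP)_{ij}^2]=P_{ij}^2$ and $\mu[(RP)_{ij}(PX)_{ij}]=\mu[(PX)_{ij}^2]=\sum_l P_{il}^2\mu[X_{lj}]$, which collapses to the identity $P_{ij}^2=\sum_l P_{il}^2\mu[X_{lj}]$. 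The matrix $A_{jl}:=\mu[X_{lj}]$ is doubly stochastic by Proposition~\ref{p:second}, and the identity above says $Av^{(i)}=v^{(i)}$. Hence $v^{(i)}$ is constant on each communicating class of $A$; in particular $\mu[X_{\ell j}]>0$ places $j,\ell$ in the same class, giving $P_{ij}^2=P_{i\ell}^2$ for every~$i$. The diagonal matrix $R\in\{-1,1\}^d$ with $R_{ii}=P_{i\ell}/P_{ij}$ (taking either sign when $P_{ij}=0$) then satisfies $RP_j=P_\ell$.

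The main technical obstacle is the bootstrap from first-moment to second-moment identities that produces $\mu[RDR^T]=D$ via a double application of Proposition~\ref{p:lasserre}(1); from there, both items (2) and (4) are driven by the same elementary Perron--Frobenius-type fact that a right-eigenvector for eigenvalue~$1$ of a finite doubly stochastic matrix must be constant on each communicating class, and the simple spectrum assumption is what makes this constant-on-classes conclusion strong enough to force $M=I_d$.
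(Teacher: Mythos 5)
Your proof is correct, and it rests on the same essential machinery as the paper's --- Proposition~\ref{p:lasserre} applied to the vanishing linear forms $(RP-PX)_{ij}$ to transfer second moments between $R$ and $X$, the identity $\mu[XX^T]=I_n$, the simple spectrum of $D$, and double stochasticity --- but you organize two of the deductions differently. Where the paper first proves $\mu\left[\norm{RD-DR}_F^2\right]=0$ by a trace expansion and then reads off $(D_{jj}-D_{ii})^2\mu[R_{ij}^2]=0$ entrywise to kill the off-diagonal second moments directly, you instead establish $\mu[RDR^T]=D$, restrict to the diagonal to get $Md=d$ for the doubly stochastic matrix $M=(\mu[R_{ij}^2])$, and invoke the fact that an eigenvalue-one right-eigenvector of a finite doubly stochastic matrix is constant on communicating classes; and for item (4) the paper passes from $v^T=v^T\mu[X]$ to $v^TX(s)=v^T$ for every permutation in the Birkhoff decomposition of $\mu[X]$ via strict convexity of the norm, whereas you reuse the same communicating-class argument on $\mu[X]^T$. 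Both substitutions are valid --- the crucial point, which you implicitly use, is that a finite doubly stochastic chain has no transient states, so every communicating class is closed and harmonic functions are genuinely constant on each class (and, for item (4), $A_{j\ell}>0$ then does place $\ell$ in the closed class of $j$). The paper's entrywise route to item (2) is somewhat more direct, while your unified Perron--Frobenius viewpoint makes items (2) and (4) two instances of a single lemma; the reversed order of items (1) and (2) is immaterial since $(RD-DR)_{ij}=(D_{jj}-D_{ii})R_{ij}$ renders the two claims essentially equivalent under the simple spectrum assumption.
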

\begin{proof}

 For given $\mu \in \N$ we have
\begin{align*}\mu\left[\fnorm{RD-DR}^2 \right]=\mu \left[\tr\left( DR^TRD \right) \right]
-2 \mu \left[\tr\left(RDR^TD\right)\right]+\mu \left[\tr\left(DRR^TD\right) \right]
\end{align*}

We show the expression above vanishes by showing that all three summands in the last expression are equal to $\fnorm{D}^2 $. For the first and third summands this follows immediately from the linearity of $\mu$ and the constraint \eqref{e:orth}. For the second summand, recall  that $D=PP^T $ and note that
\begin{align*}
\tr\left(RPP^TR^TPP^T \right)-\tr\left(PXX^TP^T PP^T \right)
 \in \langle RP-PX  \rangle_2
\end{align*}
Therefore
\begin{align*}
\mu \left[\tr\left(RDR^TD^T\right)\right]=\tr\left(P\mu[XX^T]P^T PP^T \right)=\fnorm{D}^2
\end{align*}
where for the last equality on the right we use the fact that
\begin{align*}
\mu[XX^T]=\sum_j \mu[X_jX_j^T]=\sum_j\diag \left(\mu[X_j]\right)=I_n
\end{align*}
This proves the first claim.
We now have that
$$0 =\mu \left[(RD-DR)_{ij}^2 \right]=(D_{jj}-D_{ii})^2 \mu[R_{ij}^2] $$
Since the diagonal elements of $D$ are distinct by the simple spectrum assumption, we see that for $i \neq j$ we have $\mu[R_{ij}^2]=0$ while when $i=j$ we have
$$\mu[R_{ii}^2]=\sum_j \mu[R_{ij}^2]=\mu[(RR^T)_{ii}]=1 $$
This proves the second claim.
Using Proposition~\ref{p:lasserre} it follows immediately that $\Rmu$ is diagonal, and $\Rmu_{jj}\in [-1,1] $.

We now prove our last claim. We  assume $\mu[X_{\ell,j}]>0 $, and our goal is to show that for any $i \in [d]$, $|P_{i \ell}|=|P_{i j}| $. We fix some $i$ and define a vector $v$ by
$$v_k=P_{ik}^2 $$
Our goal is to show that $v_{\ell}=v_j $. Note that for all $k$,

\begin{align}\label{e:all}
v_k=P_{ik}^2\mu[R_{ii}^2]
\overset{{(a)}}{=}\mu[(RP)_{ik}^2]\overset{(b)}{=}\mu[(PX)_{ik}^2]\overset{(c)}{=}v \mu[X_k] \nonumber
\end{align}
 where:

 $(a)$ follows from the fact that $(RP)_{ik}^2-P_{ik}^2R_{ii}^2$ is a member of $\langle R_{ij}| \; j \neq i \rangle_2$.

$(b)$ follows from the fact that
\begin{align*}
(RP)_{ik}^2-(PX)_{ik}^2=(RP-PX)_{ik}(RP+PX)_{ik}
 \in \langle (RP-PX)_{ik} \rangle_2
\end{align*}

and $(c)$ follows from
\begin{align*}
\mu[(PX)_{ik}^2]=\mu[P_{i \star}X_kX_k^T P_{i \star}^T]
=P_{i \star} \diag \left( \mu[X_k] \right)P_{i \star}^T
=v^T\mu[X_k]
\end{align*}

As \eqref{e:all} holds for all $k$, it follows that $v^T=v^T\mu[X] $. Since $\mu[X]$ is doubly stochastic it can represented as a convex combination of permutation matrices $X(s)$. Accordingly $v^T$ is a convex combination of the vectors $v^TX(s)$ whose norm is no larger than $||v||$, and this implies that $v^TX(s)=v^T$ for all $s$. There must be some $s$ for which $X_{\ell j}(s)=1$ and therefore
$$v_{j}=(v^TX(s))_j=v_{\ell} $$
\end{proof}

We now explain how the theorem is proved from the lemma. Let us assume w.l.o.g. that $P_1$ is the faithful column of $P$. Let $\J$ be the collection of indices $\ell$ such that $\mu[X_{\ell 1}]$ is strictly positive. By the last part of the lemma, for each $\ell \in \J$ there is some $R(\ell)\in\{-1,1\}^d $ such that $R(\ell)P_{1}=P_\ell$. Moreover due to faithfulness $R(\ell)$ is an exact orthogonal solution.

Now note that for all $i \in [d]$,
\begin{align*}
\mu[R_{ii}]P_{i1}=\mu[(RP)_{i1}]=\mu[(PX)_{i1}]=\left(\sum_{\ell \in \J}\mu[X_{\ell 1}]R_{ii}(\ell)\right)P_{i1}
\end{align*}
Note that if all coordinates of $P_1$ are non-zero then for each $i$ we can cancel out $P_{i1}$, so that $\mu[R] $ is a convex combination of exact orthogonal solutions as required:
\begin{equation}\label{e:conv_comb}
\mu[R]=\sum_{\ell \in \J} \mu[X_{\ell 1}]R(\ell)
\end{equation}

For the general case we define $\hat R(\ell)$ to be the diagonal matrix with diagonal elements
$$\hat R(\ell)_{ii}=\twopartdef{R(\ell)_{ii}}{P_{i1}\neq 0}{\mu[R]_{ii}}{P_{i1}=0}$$
We note that \eqref{e:conv_comb} holds when $R(\ell)$ is replaced with $\hat R(\ell)$. Thus it is sufficient to show that each $\hat R(\ell)$ is a convex combination of exact orthogonal solutions.

Since the diagonal elements of $\mu[R]$ are in $[-1,1]$, each $\hat R(\ell)$ can be written as a convex combination of matrices $R(\ell,\alpha) \in \{-1,1\}^d $ satisfying the condition
$$R(\ell,\alpha)_{ii}=\hat R(\ell)_{ii}=R(\ell)_{ii} \text{ if } P_{i 1} \neq 0$$
Note that $R(\ell,\alpha)P_1=R(\ell)P_1=P_{\ell} $ and so all matrices $R(\ell,\alpha)$ are exact orthogonal solutions due to faithfulness. This concludes the proof of the symmetric version of the theorem.

In the asymmetric case, what we showed is that PM-SDP returns the unique exact orthogonal solution of PM. It remains only to show that the $X $ coordinate obtained from PM-SDP agrees with the exact solution as well. This follows from the following lemma

\begin{lemma}\label{l:Xrecovery}
Assume $P,Q $ satisfy $\dist(P,Q)=0 $ and  $Q$ has no repeated points. Let $(R(0),X(0))\in G $ be an exact solution for PM. Then $X=X(0)$ is the unique doubly stochastic solution to the equation
\begin{equation}\label{e:Rknown}
R(0)P=QX
\end{equation}
        \end{lemma}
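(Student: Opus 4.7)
The plan is straightforward: show that any doubly stochastic $X$ satisfying $QX = QX(0)$ must coincide with $X(0)$ itself. Existence is immediate, since $R(0)P = QX(0)$ by assumption that $(R(0),X(0))$ is an exact solution. Note also that $R(0)P = QX$ together with $R(0)P = QX(0)$ immediately reduces the lemma to the assertion that $Y = X(0)$ is the only doubly stochastic solution of $QY = QX(0)$.

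For uniqueness, I would first apply Birkhoff's theorem to decompose any such $Y$ as $Y = \sum_s \lambda_s X(s)$, a convex combination of permutation matrices with $\lambda_s > 0$ and $\sum_s \lambda_s = 1$. Writing $\sigma_s$ for the permutation represented by $X(s)$ and $\pi$ for the permutation of $X(0)$, equating column $j$ of $QY$ with column $j$ of $QX(0)$ gives
\[
\sum_s \lambda_s Q_{\sigma_s(j)} \;=\; Q_{\pi(j)}, \qquad j \in [n].
\]

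The main step is a Jensen argument with the squared Euclidean norm. Convexity of $\norm{\cdot}^2$ yields $\norm{Q_{\pi(j)}}^2 \leq \sum_s \lambda_s \norm{Q_{\sigma_s(j)}}^2$ for every $j$. Summing over $j$ and using that each $\sigma_s$ as well as $\pi$ is a bijection of $[n]$, both sides collapse to $\sum_j \norm{Q_j}^2$, so equality holds in every Jensen inequality. Strict convexity of $\norm{\cdot}^2$ then forces all $Q_{\sigma_s(j)}$ (over $s$ in the support of the $\lambda_s$) to coincide, and because $Q$ has no repeated columns, the permutations $\sigma_s$ must all agree with a single $\sigma$. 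Hence $Y = X(\sigma)$ is itself a permutation matrix, and $QX(\sigma) = QX(0)$ combined with the distinctness of the columns of $Q$ forces $\sigma = \pi$, giving $Y = X(0)$.

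I do not anticipate a serious obstacle. The one subtle point is that this summation trick uses both halves of double stochasticity: the column-sum property turns each $Y_j$ into a genuine convex combination (enabling Jensen), while the row-sum property, accessed through Birkhoff and the bijectivity of each $\sigma_s$, is precisely what makes the sum over $j$ collapse and produces the tightness needed to force all $\sigma_s$ to agree.
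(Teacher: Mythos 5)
Your proof is correct, and it takes a different route from the paper's. The paper never decomposes $X$ explicitly: it observes that expanding $0\leq\fnorm{R(0)P-QX}^2$ gives $\tr\bigl(R(0)PX^TQ^T\bigr)\leq\tfrac12\bigl(\fnorm{R(0)P}^2+\fnorm{QX}^2\bigr)\leq\fnorm{P}^2$ for every doubly stochastic $X$ (the second inequality using that a doubly stochastic matrix has operator norm at most one), so that solutions of $R(0)P=QX$ are exactly the maximizers of the linear functional $X\mapsto\tr\bigl(R(0)PX^TQ^T\bigr)$ over the Birkhoff polytope; since the distinct-columns assumption makes $X(0)$ the unique maximizing vertex, it is the unique maximizer over the whole polytope. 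You instead invoke Birkhoff's theorem to write $Y=\sum_s\lambda_s X(s)$ and run a column-wise Jensen argument with the strictly convex function $\norm{\cdot}^2$, using bijectivity of the $\sigma_s$ to collapse the summed inequality and force all permutations in the support to agree. Both arguments ultimately rest on the same two ingredients (strict convexity of the squared norm and the vertex structure of the doubly stochastic polytope), but yours is more elementary and self-contained --- it avoids the unproved norm bound $\fnorm{QX}\leq\fnorm{Q}$ and makes explicit where each half of double stochasticity enters --- while the paper's is shorter and packages the uniqueness as a clean linear-programming certificate.
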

        \begin{proof}
                Note that since the columns of $Q$ are pairwise disjoint $X(0)$ is the unique solution of \eqref{e:Rknown}  over the set of permutation. It remains to show this is the case over the set of doubly stochastic matrices as well.

                Note that for any doubly stochastic $X$,
                \begin{align*}
                0\leq\fnorm{R(0)P-QX}^2
                =\fnorm{R(0)P}^2-2\tr{R(0)PX^TQ^T}+\fnorm{QX}^2
                \end{align*}
                and therefore
                \begin{align*}
                \tr{R(0)PX^TQ^T} \leq \frac{1}{2} \left( \fnorm{R(0)P}^2+\fnorm{QX}^2 \right)
                 \leq \frac{1}{2} \left( \fnorm{P}^2+\fnorm{Q}^2 \right)=\fnorm{P}^2
                \end{align*}
                 Therefore, the linear functional $\tr{R(0)PX^TQ^T}$ is maximized by $X=X(0)$ and for any other permutation it is suboptimal. It is therefore suboptimal also for any other matrix in the convex-hull of the permutations, namely the doubly-stochastic matrices.

        \end{proof}

\section{Finding extreme points}\label{sec:extreme}
Our goal in this section is to explain how the characterization of the convex hull of exact orthogonal solutions from Theorem~\ref{th:main} can be used to obtain \emph{all} exact solutions.

The main observation is that according to Theorem~\ref{th:main}, the convex hull of the exact orthogonal solutions is the intersection of the feasible set of the relaxation with the hyperplane defined by constraining the objective to be zero. Therefore, it is possible to preform convex optimization over this set. More specifically, we randomly draw a matrix $W\in \RR^{d^2}$ with a uniform distribution over the unit sphere (w.r.t. the Frobenius norm) and obtain exact orthogonal solutions by solving the following optimization problem:

\begin{subequations}\label{e:random}
        \begin{align}
        \max_\mu \quad & \tr \,{W\mu[R]}\\
        s.t.     \quad & \mu \in \MF \\
                 \quad & \mu[\fnorm{RP-QX}^2]=0 \label{e:obj_zero}
        \end{align}
\end{subequations}
We now show that the random algorithm returns a unique exact solution with probability one, and the probability of obtaining each exact solution is uniform.
\begin{proof}[Proof of Theorem~\ref{th:proj}]
        Let $(R(i),X(i)), i=0,\ldots,r-1 $ be the members of $\Iso(P,Q)$. We begin by considering the case $P=Q$. In this case $\Iso_R(P,P)$ is a group and we index the group so that $R(0)=I_d $.

        A maximizer $\mu$ for \eqref{e:random} satisfies $\mu[R]=R(i) $  iff $W $ is a member of the set
        $$\A_i=\{W| \,\tr (W^TR(i))>\tr(W^TR(j)), \forall j \neq i \} $$
        clearly the union of $\A_i $ is a disjoint union, and has probability one.
        We note that $R(\ell) \A_0=\A_{\ell} $. Additionally, the probability of the sets $\A_{\ell} $ is preserved under linear isometries of the space $\RR^{d \times d} $ endowed with the Frobenius inner product, and the map $W \mapsto R(\ell)W $ is such an isometry. Therefore the sets $\A_{\ell}=R(\ell)\A_0 $ all have the same probability.

        The general case where $P \neq Q$  follows from the fact that $R(0)^T\Iso_R(P,Q)=\Iso_R(P,P) $.

        To conclude the argument, note that by lemma~\ref{l:Xrecovery}  $\mu[R]=R(i)$ implies that $\mu[X]=X(i)$.
\end{proof}

\subsection{Stability}\label{sub:perturb}
In applications  PM  problems are usually contaminated with  a certain amount of noise. It is therefore important to verify that the solutions of near-exact problems can be recovered as well. To this end, we now consider the case of point clouds  $P(\delta),Q(\delta) $ which are obtained by perturbing point clouds $P(0)=P$ and $Q(0)=Q$ which fulfill the conditions of Theorem~\ref{th:main}.

We propose  a modified version of the algorithm described above to obtain all solutions of the unperturbed problem. Note that  generally $PM(P(\delta),Q(\delta)) $ won't  have exact convex solutions (solutions with zero optimal objective value), so that    \eqref{e:random} will become infeasible due to \eqref{e:obj_zero}. Therefore we relax this constraint and instead require that the objective of PM-SDP is smaller than some suitable $\epsilon(\delta)$. This results in the following optimization problem:
\begin{subequations}\label{e:random_modified}
        \begin{align}
        \max_\mu \quad & \tr{W\mu[R]}\\
        s.t.     \quad & \mu \in \MF \\
        \quad & \mu[\fnorm{RP(\delta)-Q(\delta) X}^2]\leq \epsilon(\delta) \label{e:obj_eps}
        \end{align}
\end{subequations}

An optimal choice for $\epsilon(\delta) $ is
$$\bar \epsilon(\delta)=\max_{i=0,\ldots,r-1} \fnorm{R(i)P(\delta)-Q(\delta)X(i)}^2  $$
 This choice assures that all members of  $\Iso(P,Q)$  are in the feasible set, while excluding as many irrelevant solutions as possible.

We claim that when $P(\delta),Q(\delta)$ are close to $P,Q$, we obtain close to exact solutions, with close to uniform probability. To make this statement more precise we need to introduce some notation.
We denote the minimizers of \eqref{e:random_modified} by $\mu_\delta(W)$, and their $R$ coordinate by $R_\delta(W)$. Note that both $\mu_\delta(W)$ and $R_\delta(W)$ are sets which may generally contain more than one solution. Also note that when $\delta=0$ the optimization problems \eqref{e:random_modified} and  \eqref{e:random} are identical, and so $R_0(W)$ is uniquely defined almost everywhere and attains all exact orthogonal solutions with uniform probability due to theorem~\ref{th:proj}. The solutions $R_\delta(W)$ are close to the unperturbed solution $R_0(W) $ if
$$r_\delta(W)=\inf \{r| \quad  R_\delta(W) \subseteq B_r(R_0(W)) \} $$
is small.  We would like to show that for any $\eta > 0$,
$$\mathrm{Prob}\{W| \, r_\delta(W)\geq \eta \} \overset{\delta \rightarrow 0}{\longrightarrow} 0 $$
In other words, we claim that $r_\delta$ converges in probability to zero.
We prove a slightly stronger claim
\begin{proposition}\label{th:noise}
        Assume $\epsilon(\delta)\geq \bar{\epsilon}(\delta) $ and $\epsilon(\delta)\overset{\delta \rightarrow 0}{\longrightarrow} 0 $. Then
        $$r_\delta \overset{a.s.}{\longrightarrow} 0 $$
\end{proposition}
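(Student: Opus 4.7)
The plan is to combine compactness of $\MF$ with a limit argument: any accumulation point of optimizers of~\eqref{e:random_modified} as $\delta \to 0$ must be an optimizer of the unperturbed problem~\eqref{e:random}, and by Theorem~\ref{th:proj} the unperturbed optimum has a uniquely determined $R$-coordinate for almost every $W$.

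Fix $W$ in the full-measure set $\bigcup_i \A_i$ from the proof of Theorem~\ref{th:proj}, with $W \in \A_{i^*}$ so that $R_0(W)=\{R(i^*)\}$. Suppose for contradiction that $r_\delta(W) \not\to 0$, so that there exist $\eta > 0$, a sequence $\delta_n \to 0$, and optimizers $\mu_n \in \mu_{\delta_n}(W)$ with $\fnorm{\mu_n[R] - R(i^*)} \geq \eta$. Since $\MF$ is closed and bounded in the finite-dimensional coordinates~\eqref{e:vars} (Section~\ref{sec:properties}), after passing to a subsequence the moment vector of $\mu_n$ converges to that of some $\mu^* \in \MF$. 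Because $P(\delta),Q(\delta)$ are continuous in $\delta$, so are the coefficients of the polynomial $\fnorm{RP(\delta)-Q(\delta)X}^2$ in $R,X$, and hence
$$\mu^*\brac{\fnorm{RP-QX}^2} = \lim_n \mu_n\brac{\fnorm{RP(\delta_n)-Q(\delta_n)X}^2} \leq \lim_n \epsilon(\delta_n) = 0,$$
which shows $\mu^* \in \N(P,Q)$.

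To see that $\mu^*$ is actually optimal for~\eqref{e:random}, observe that the Dirac measure at each $(R(i),X(i))$ is feasible for~\eqref{e:random_modified} precisely because $\epsilon(\delta) \geq \bar\epsilon(\delta)$. Optimality of $\mu_n$ therefore yields $\mathrm{tr}(W\mu_n[R]) \geq \max_i \mathrm{tr}(WR(i)) = \mathrm{tr}(WR(i^*))$, and letting $n\to\infty$ gives $\mathrm{tr}(W\mu^*[R]) \geq \mathrm{tr}(WR(i^*))$. By Theorem~\ref{th:main}, $\mu^*[R] \in \N_R(P,Q) = \conv\{R(0),\ldots,R(r-1)\}$, so we can write $\mu^*[R] = \sum_j \lambda_j R(j)$ for some probability vector $\lambda$. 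The defining condition $W \in \A_{i^*}$ makes $R(i^*)$ the strict maximizer of $R \mapsto \mathrm{tr}(WR)$ among the $R(j)$'s, so the above inequality forces $\lambda_{i^*}=1$ and hence $\mu^*[R]=R(i^*)$, contradicting $\fnorm{\mu_n[R]-R(i^*)} \geq \eta$.

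The main obstacle is that a generic $\mu \in \N(P,Q)$ need not be feasible for the perturbed relaxation~\eqref{e:random_modified}, so optimality of the limit $\mu^*$ for~\eqref{e:random} cannot be derived by a direct inequality chain over all of $\N(P,Q)$. The workaround is to compare the optimal value of $\mu_n$ only against the Dirac measures at the exact solutions $(R(i),X(i))$---whose feasibility for~\eqref{e:random_modified} is exactly what the hypothesis $\epsilon(\delta) \geq \bar\epsilon(\delta)$ was designed to ensure---and to invoke Theorem~\ref{th:main} to identify $\N_R(P,Q)$ with $\conv\{R(i)\}$, so that these extreme-point comparisons are enough to pin down $\mu^*[R]$.
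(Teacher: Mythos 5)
Your proposal is correct and follows essentially the same route as the paper's proof: a contradiction argument using compactness of $\MF$ to extract a convergent subsequence of optimizers, feasibility of the exact solutions for the perturbed problem via $\epsilon(\delta)\geq\bar\epsilon(\delta)$, feasibility of the limit for the unperturbed problem via $\epsilon(\delta)\to 0$, and uniqueness of $R_0(W)$ for almost every $W$. You merely make explicit (via Theorem~\ref{th:main} and the sets $\A_i$) the step the paper leaves implicit, namely that the optimal value of the unperturbed problem is $\tr\,(WR_0(W))$ and that this value is attained only at $R_0(W)$.
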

In Appendix~\ref{a:meas} we show that $r_\delta$ is Borel measurable and so the probabilistic language used above is justified. The proposition above is essentially a known result on perturbations of SDPs (see \cite{bonnans2013perturbation} pages 492-493). We include the proof for completeness.
\begin{proof}
We show pointwise convergence for every $W$ such that $R_0(W) $ is uniquely defined. Assume by contradiction that there is a sequence $\delta_n \longrightarrow 0$ such that $r_{\delta_n}(W) \longrightarrow a>0 $. We can then choose  $\mu_n \in \mu_{\delta_n}(W) $ such that
\begin{equation}
\label{e:contradiction}
|\mu_n[R]-R_0(W)|\longrightarrow a
\end{equation}
By moving to a subsequence, we can assume that $\mu_n $ converges to some $\mu$. We obtain a contradiction to \eqref{e:contradiction} by showing that $\mu[R]=R_0(W) $. This is because
$$
\tr{\,WR_0(W)}\overset{(*)}{\leq} \tr{\,W\mu_n[R]}\longrightarrow \tr{\,W\mu[R]} \overset{(**)}{\leq} \tr{\,WR_0(W)}
$$
where $(*)$ follows from the fact that by assumption $R_0(W) $ is a feasible point of \eqref{e:random_modified}, and $(**)$ follows from the fact that $\mu$ is a feasible point of \eqref{e:random}  since $\epsilon(\delta) \longrightarrow 0 $. It follows that $\mu$ is a maximizer of \eqref{e:random} and therefore $\mu[R]=R_0(W)$ due to the uniqueness of $R_0(W)$.
\end{proof}

\section{Experiments}\label{sec:exp}
In this section we present experiments illustrating our theoretical results, and experimentally explore the exactness of PM-SDP in cases not covered by our analysis, \ie, when either the simple spectrum condition or the  faithfulness condition fail.

To illustrate our results for the asymmetric case we conducted the following experiment. A point cloud $P$ was generated from an i.i.d. normal distribution, and then centralized. $Q$ was obtained by applying random $R,X$ to $P$. We then solved PM-SDP on the generated point cloud. This process was repeated one-thousand times and in all experiments the unique exact solution with objective value $0$ was obtained as can be seen in Figure~\ref{fig:hist}(a).

To illustrate our results for the symmetric case we conducted an experiment which we have already mentioned earlier (Figure~\ref{fig:sym}). We solve PM-SDP on planar point clouds $P=Q$ whose symmetries are
\begin{equation*}
R_0=I_2, \quad R_1=\left( \begin{array}{cc}
-1 & 0   \\
0 & 1
\end{array}  \right)
\end{equation*}
As predicted, the $R$ coordinate of PM-SDP is in the convex hull of $R_i, i=0,1$. We then ran one-hundred iterations of the random algorithm described in Section~\ref{sec:extreme}. As predicted, in all iterations one of the solutions $R_i$ was attained (up to a small numerical error). We also proved that $R_0$ and $R_1$ should be attained with uniform probability. In this case $R_0$ was attained in $51$ experiments and $R_1$ in $49$ experiments.

We now discuss the cases which are not covered by our analysis.
 We begin by applying PM-SDP  to the asymmetric unfaithful example from Section~\ref{sec:intro}. The results are shown on the top row of Figure~\ref{fig:tightness}. We solve PM-SDP, taking $P$ (green) to be the unfaithful shape, and $Q$ (blue) to be a relabeled, rotated version of $P$. It turns out that although $P$ doesn't meet the conditions of Theorem~\ref{th:main} due to unfaithfulness,  PM-SDP is still able to recover the correct orthogonal transformation and permutation. Indeed, as can be seen on the right, applying the obtained transformation  to $P$ we are able to align the two shapes perfectly.

In contrast, we provide an example for a shape with multiple eigenvalues where exact recovery is not obtained. These results are shown in the bottom row of Figure~\ref{fig:tightness}. The input shape $P$ is asymmetric, but applying PM-SDP to $P$ (green) and  $Q$ (blue) chosen to be a rotated, relabeled version of $P$, does not yield a solution in $G$. To obtain a solution in $G$ we used the projection procedure described in \cite{Haggai}. The obtained solution is still not the correct exact solution as shown in the image on the bottom right which is not aligned correctly.

We also conducted some random experiments. The results are summarized in Figure~\ref{fig:hist}. Each experiment was run one-thousand times on point clouds $P$ and $Q$ obtained from $P$ by applying a random orthogonal transformation and permutation.

Experiment (b) quantitatively shows that exact recovery is obtained for unfaithful shapes as well. Each $P$ here is generated by superimposing two shapes with different reflectional symmetry as in the unfaithful example in Figure~\ref{fig:tightness}.

Experiment (c) quantitatively shows that exact recovery isn't always obtained for shapes with eigenvalue multiplicity. Here the two dimensional asymmetric shapes $P$ are generated by superimposing two shapes with non-bilateral symmetry groups. In all cases PM-SDP returns a solution whose $R,X$ coordinates aren't in $G$, and we project onto $G$ using the scheme described in \cite{Haggai}. It can be seen that in some cases the PM objective obtained from $R,X$ after projection is non-zero although an exact solution does exist.

Experiment (d) examines a different class  of shapes with eigenvalue multiplicity for which exact recovery is obtained. Here $P$ is generated from an i.i.d. normal distribution. It is then centralized and scaled along one of its principal axes so that a shape with eigenvalue multiplicity is obtained.
In all cases PM-SDP returned the correct exact solution of the problem (even before projection).


\begin{figure}[t]
        \centering
        \includegraphics[width=0.7\columnwidth]{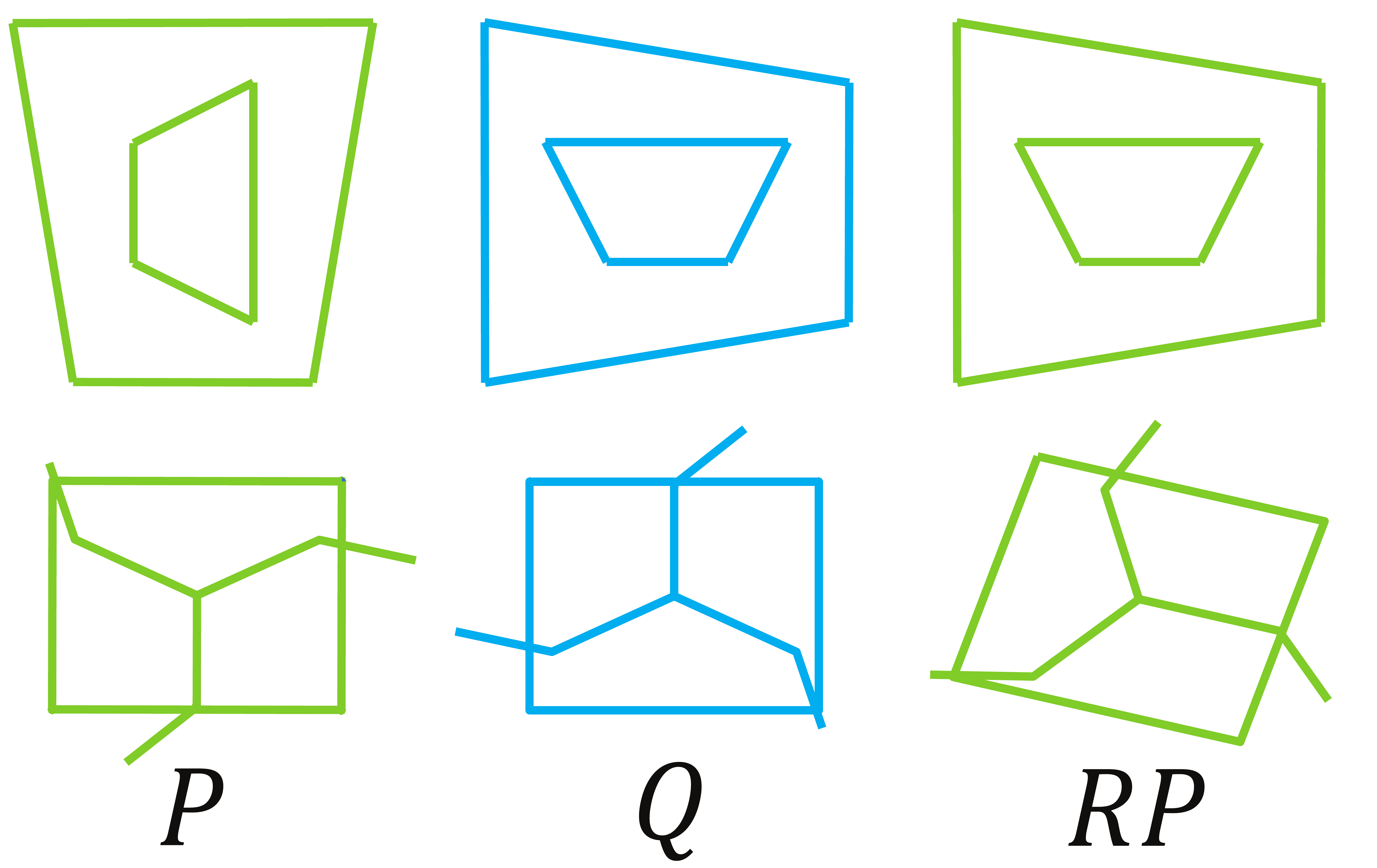}
        \vspace{-0cm}
        \caption{Exact recovery when the conditions of Theorem~\ref{th:main} aren't met. By applying PM-SDP to the unfaithful point clouds $P$ (green, top left) and $Q$ (top blue) the correct orthogonal transformation and permutation taking $P$ to $Q$ can be reconstructed ($RP$, green, top right). The bottom row shows that PM-SDP fails to align the shapes $P$ and $Q$ which violate the simple spectrum condition.}
        \vspace{-0.3 cm}
        \label{fig:tightness}
\end{figure}

 To summarize, we are not aware of simple spectrum, unfaithful shapes, for which PM-SDP does not achieve exact recovery. Thus a possible direction for future work is investigating whether this condition can be removed or weakened. In contrast we provide examples of  shapes with double eigenvalue multiplicity for which exact recovery fails, and thus the simple spectrum condition is tight.
\section{Exact graph matching}\label{sec:complexity}
The purpose of this section is establishing that PM, and in fact even the subproblem of exact PM, is computationally hard. We do this by showing equivalence of exact PM and exact graph matching.

For our discussion here we define the exact PM problem as the problem of deciding whether $\dist(P,Q)=0$ for given point clouds $P,Q \in \RR^{d \times n} $.
The input of the exact graph matching (GM) problem is a pair of graphs $G_A=(V_A,A) $ and $G_B=(V_B,B) $. The vertices of the graphs are assumed to be of equal cardinality $n$. The matrices $A,B$  are assumed to be symmetric and are chosen to reflect relevant information on the graph. The goal of the exact graph matching problem is deciding whether there is a relabeling of the vertices so that the graphs are identical. Mathematically this amounts to deciding whether there is a permutation matrix $X \in \perm_n$ such that
\begin{equation}
\label{e:GM}
XA=BX
\end{equation}

In the special case where $A,B$ are the adjacency matrices of the graphs, this problem is called the graph isomorphism problem. The graph isomorphism problem is in NP, but is not known to be either NP-complete or in P. We now prove the equivalence of exact PM and exact GM:
\begin{proof}[proof of Theorem~\ref{th:GM}]
The main observation needed for the proof is that if $P,Q \in \RR^{d \times n}$ satisfy $P^TP=A $ and $Q^TQ=B $, then the set of  solutions of \eqref{e:GM} is exactly the projection onto the $X$ coordinate of the set of solutions to the exact PM equation
\begin{equation}\label{e:PMhelper}
RP=QX
\end{equation}
To see this note that if \eqref{e:PMhelper} holds then
\begin{equation*}
XA=X(RP)^TRP=X(QX)^TQX=BX
\end{equation*}
On the other hand, if \eqref{e:GM} holds then
$$P^TP=X^TQ^TQX $$
and so $P$ and $QX $ are two factorizations of the same matrix. Since factorization is unique up to an orthogonal transformation there must be some $R \in \OO(d) $ such that \eqref{e:PMhelper} holds.

\begin{figure}[t]
        \centering
        \begin{tabular}{c c c c}
                \includegraphics[width=.2\columnwidth]{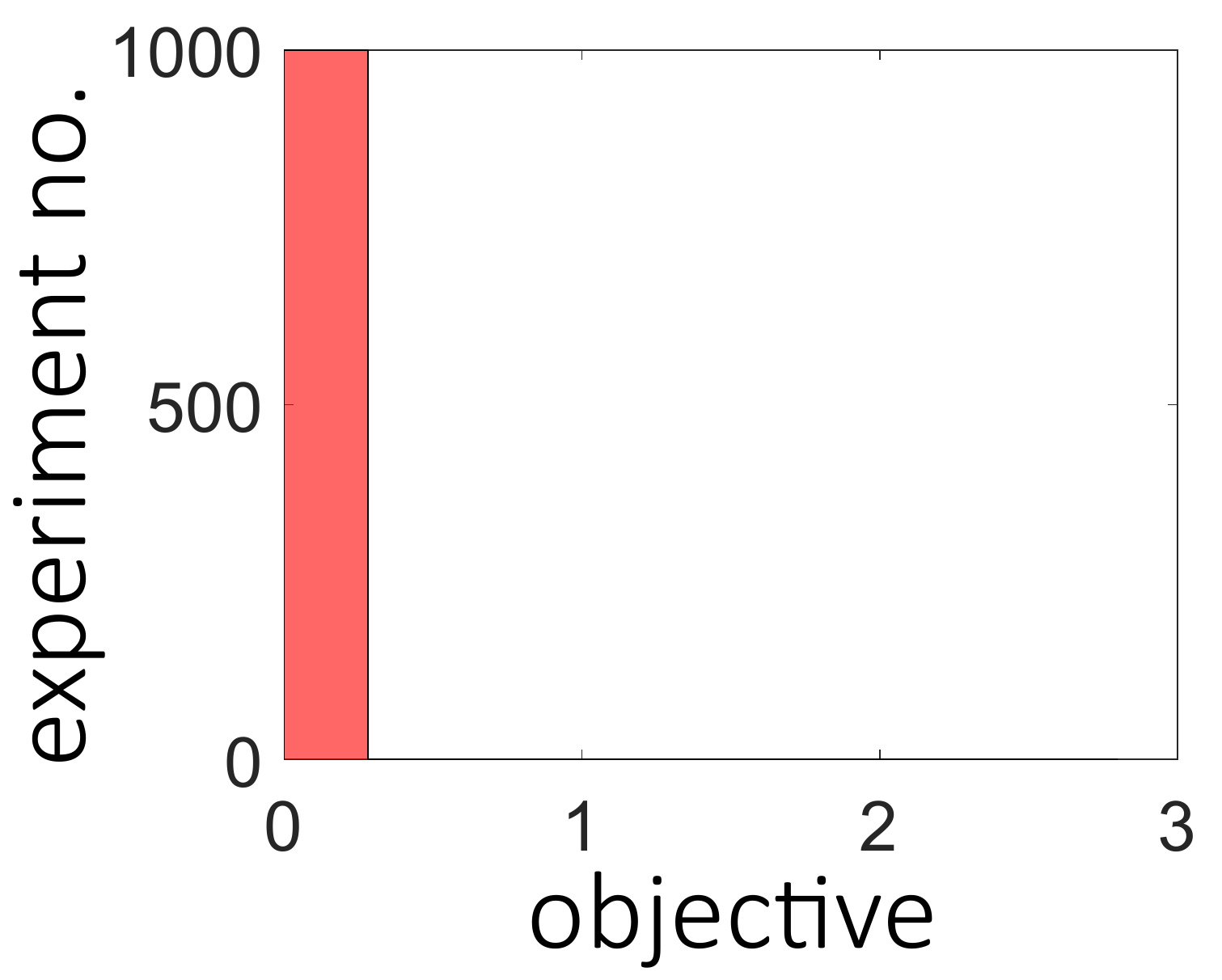}&
        \includegraphics[width=.2\columnwidth]{Hist_a_Fat.pdf}&
        \includegraphics[width=.2\columnwidth]{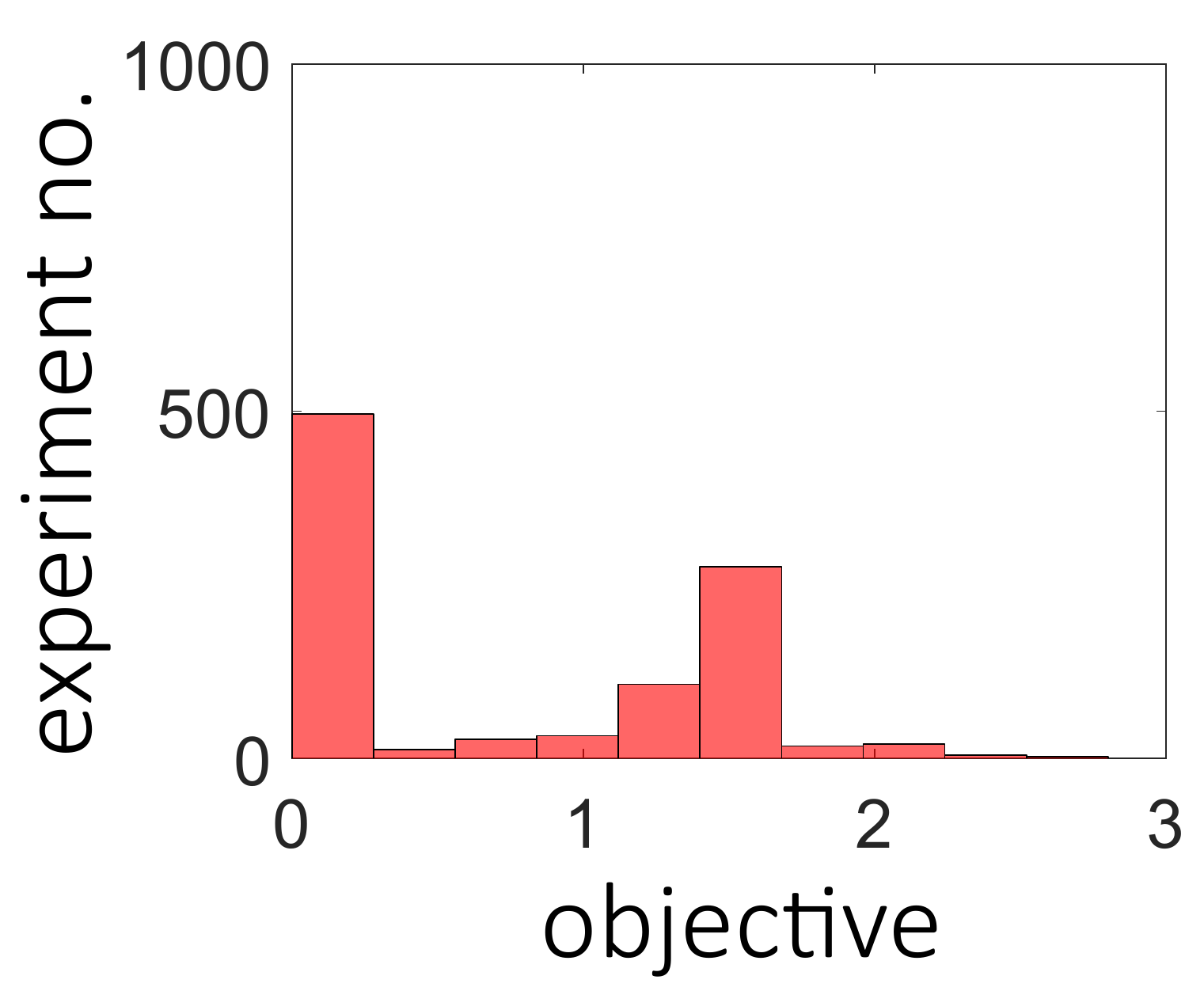}&
        \includegraphics[width=.2\columnwidth]{Hist_a_Fat.pdf}\\
                \quad \, (a)& \quad \, (b)& \quad \,  (c) & \quad \, (d) \\

        \end{tabular}

        \vspace{-0.3cm}
        \caption{Exact recovery experimental evaluation. When the conditions of the (asymmetric) exact recovery theorem are met, an optimal solution with objective zero is always attained (a). Exact recovery is also obtained for randomly generated unfaithful point clouds (b). For randomly generated shapes with eigenvalue multiplicity exact recovery can fail (c). In (d) successful exact recovery is shown for an easier class of shapes with eigenvalue multiplicity.}
        \vspace{-0.3 cm}
        \label{fig:hist}
\end{figure}

We now prove the theorem using this observation. We begin by showing a polynomial reduction from exact GM to exact PM. Assume we are given two graphs defined by the matrices $A,B $. We note that we can assume w.l.o.g. that $A,B \succeq 0$ since for any $\lambda \in \RR$, a permutation $X$ satisfies \eqref{e:GM} if and only if
$$X(A+\lambda I_n)=(B+\lambda I_n)X $$
We can therefore factorize $A,B $ to obtain $P,Q $, and the observation implies that the graphs are isomorphic iff $d(P,Q)=0 $.

For the opposite reduction- assume we are given point clouds $P,Q $. We define $A=P^TP $ and $B=Q^TQ$. Our observation implies that $d(P,Q)=0$ if and only if there is a solution to \eqref{e:GM}.
\end{proof}

\appendix
\section{Measurability}\label{a:meas}
We show $r_\delta $ is upper semi-continuous at almost every $W$ and hence Borel measurable. Assume by contradiction that there is some sequence $W_n \longrightarrow W $ such that $ r_\delta(W_n)\longrightarrow a>r_\delta(W) $. We can then choose  $R_n \in R_\delta(W_n) $ such that
\begin{equation}
        \label{e:contradiction2}
        |R_n-R_0(W)| \longrightarrow a
\end{equation}
by moving to a subsequence we can assume that $R_n$ converges to some $R$, using similar reasoning as in the proof of Theorem~\ref{th:noise} it can be shown that $R \in R_\delta(W) $. It follows that $|R-R_0(W)| \leq r_\delta(W)$ in contradiction to \eqref{e:contradiction2}.

\section*{Acknowledgements}
We would like to thank Haggai Maron for providing Figure~\ref{fig:hm}. This research was supported in part by the European Research Council (ERC Starting Grant ``SurfComp'', Grant No.~307754), the Israel Science Foundation (Grant No.~1284/12), I-CORE program of the Israel PBC and ISF (Grant No.4/11).

\bibliographystyle{siamplain}
\bibliography{procrustesPNAS}

\end{document}